\documentclass[11pt]{amsart}
\usepackage{amsmath}
\usepackage{amssymb}
\usepackage{euscript}
\usepackage{enumerate}
\usepackage{color}
\usepackage{graphics}

\newtheorem{theorem}[subsubsection]{Theorem}
\newtheorem{lemma}[subsubsection]{Lemma}
\newtheorem{cor}[subsubsection]{Corollary}

\newtheorem{prop}[subsubsection]{Proposition}

\newtheorem{definition}[subsubsection]{Definition}

\numberwithin{equation}{section}

%%%%%%%%%%%%%%%%%%%%%%%%%%%%%%%%%%%%%%%%%%%%%%%%%%%%%
\title[Regularity conditions of 3D Navier-Stokes]{Regularity conditions of 3D Navier-Stokes flow in terms of large spectral components}
%%%%%%%%%%%%%%%%%%%%%%%%%%%%%%%%%%%%%%%%%%%%%%%%%%%%%
\begin{document}

\author{Namkwon Kim}
\address{Department of Mathematics, \newline \indent Chosun University, \newline \indent Kwangju 501-759, Korea}
\email{kimnamkw@chosun.ac.kr}
\author{Minkyu Kwak}
\address{Department of Mathematics, \newline \indent Chonnam National University, \newline \indent Kwangju 501-757, Korea}
\email{mkkwak@chonnam.ac.kr}
\author{Minha Yoo}
\address{National Institute for Mathematical Science, \newline \indent Daejeon 305-811, Korea}
\email{mhyoo0702@gmail.com}
\maketitle
\begin{abstract}
We develop Ladyzhenskaya-Prodi-Serrin type spectral regularity criteria for 3D incompressible Navier-Stokes equations in a torus.
Concretely, for any $N>0$, let $w_N$ be the sum of all spectral components of the velocity fields whose all three wave numbers are greater than $N$ absolutely.  Then, we show that for any $N>0$, the finiteness of the Serrin type norm of $w_N$ implies the regularity of the flow.
It implies that if the flow breaks down in a finite time, the energy of the velocity fields cascades down to the arbitrarily large spectral components of $w_N$ and corresponding energy spectrum, in some sense, roughly decays slower than $\kappa^{-2}$ .
\end{abstract}

%% main text
%%%%%%%%%%%%%%%%%%%%%%%%%%%%%%%%%%%%%%%%%%%%%%%%%%%%%
\section{Introduction}\label{intro}
%%%%%%%%%%%%%%%%%%%%%%%%%%%%%%%%%%%%%%%%%%%%%%%%%%%%%
In this paper, we are concerned with the 3D incompressible Navier-Stokes equations in a flat torus:
\begin{equation} \label{eq-main-ns}
\begin{cases}
\displaystyle\frac{\partial \bold u}{\partial t}(x,t) - \nu \Delta \bold u(x,t) + \bold u(x,t) \cdot \nabla \bold u(x,t)  + \nabla p(x,t) = \bold f(x,t),\\
\text{div } \bold u(x,t) = 0,\\
\bold u(x,0) = \bold u_0(x)
\end{cases}
\end{equation}
for $x \in \mathbb T^3 = \mathbb R^n / \mathbb Z^n$.  Here, ${\bold u}(\cdot, t) :  {\mathbb T^3} \to {\mathbb R^3}$ is the velocity fields
and $p(\cdot,t) :  {\mathbb T^3} \to {\mathbb R}$ is the pressure.
 If $\bf{u}_0$ is smooth enough, there exists a unique solution of \eqref{eq-main-ns} for $0<t < T$ for some $T>0$(See \cite{CF} and references therein).   Then, there arises a natural question whether $T=\infty$ for every smooth $u_0$ or not.   This question has not been
settled yet.   However,  if the solution satisfies one of the following conditions up to $t=T$ for some $T>0$,
\begin{enumerate}
\item \label{eq-serrin-1}
$\nabla \bold u \in L^r(0,T;L^q(\mathbb T^3)),\, \displaystyle\frac{2}{r} + \frac{3}{q} = 2,\, q \in [3/2,\infty]$,
\item \label{eq-serrin-3}
$\bold u \in L^r(0,T;L^q(\mathbb T^3)),\, \displaystyle\frac{2}{r} + \frac{3}{q} = 1,\, q \in [3,\infty]$.
\end{enumerate}
then the solution becomes smooth for $0<t \leq T$\cite{L,P,SS,S}.  The above conditions are called the Serrin conditions.
The same is true for $\mathbb R^3$  instead of the flat torus $\mathbb T^3$.
%We call $\| u \|_{L^{q,r}(0,T)} \equiv \| u \|_{L^{q,r}(0,T;\mathbb{T}^3)}$ the Serrin norm in this paper.

We are interested in improving the above Serrin conditions in terms of spectral decomposition.
Some parts of the Fourier expansion of the velocity fields can be essentially considered as a two dimensional flow and we expect that a certain genuinely three dimensional part of the velocity fields plays the crucial role for the regularity of the flow.  To be specific, let
\begin{equation} \label{def-ck}
\bold u = \sum_{k \in \mathbb Z^3}\bold c^k e^{2\pi ik\cdot x}
\end{equation}
be the Fourier expansion of $\bold u$.   For any $N>0$, we call
\begin{equation}  \label{def-wN}
\bold w_N \equiv Q_N (\bold u) \equiv \sum_{|k_1|, |k_2|,|k_3| > N,} \bold c^k e^{2\pi ik\cdot x}
\end{equation}
a genuine 3D part of $\bold u$ cut at the mode $N$.
Then, we shall show that if, for any $q >1$,
\begin{equation}  \label{eq-serrinN-1}
 \liminf_{N\to \infty} \| Q_N (\bold u ) \|_{L^{q,r}(0,T;\mathbb T^3)} < \infty, \quad 3/q+2/r=1
\end{equation}
\mbox{ or }
\begin{equation} \label{eq-serrinN-3}
\liminf_{N\to \infty} \| \nabla Q_N (\bold u )\|_{L^{q,r}(0,T;\mathbb T^3)} < \infty,  \quad 3/q+2/r=2,
\end{equation}
then the regularity of $\bold u$ up to $t=T$ is guaranteed.

If $\bf{u}_0$ is smooth and $\bold u$ breaks down firstly at a finite time, $t=T$,  the above condition actually means that $L^{q,r}$ norm
of $\bold w_N$ blows up as $t\to T$ for any $N>0$.  Taking $q=3$, $r=\infty$, this, in particular,
implies the energy density of $\bold w_N$,
\[ E(\kappa) \equiv \sum_{|k| =\kappa} |\bold c^k|^2 k^2,  \]
is not uniformly bounded by $ \kappa^{-\delta}$ for any $\delta > 2$ near $t=T$.
In many statistical turbulence theories,  the energy spectrum is expected to behave like $\kappa^{-a}$ with $a \in [1,2]$.   For example, in Markovian Random Coupling Model and Quasi-Normal Markovian Approximation theory, $E(\kappa)\sim \kappa^{-2}$ is expected(see chapter $VII$ of \cite{ML}) and, in Kolomogorov's homogeneous turbulent theory, $E(\kappa) \sim \kappa^{-5/3}$ is expected\cite{Kol,ML}.
In this sense, our result actually supports that the breakdown of the solutions is related with the turbulent aspect of the genuine 3D part of the velocity fields.

%%%%%%%%%%%%%%%%%%%%%%%%%%%%%%%%%%%%%%%%%%%%%%%%%%%%%
\section{Preliminary}\label{mainthm}
%%%%%%%%%%%%%%%%%%%%%%%%%%%%%%%%%%%%%%%%%%%%%%%%%%%%%
We define $\mathcal V$ the set of all smooth functions $\bold \varphi$ which are divergence free and mean zero, i.e., $\mathcal V = \{ \bold \varphi \in C^\infty(\mathbb T^3)^3 : \int \varphi dx = 0, \text{ and }\text{div} \bold \varphi = 0 \}$. We also define $H$ and $V$ by the closure of the set $\mathcal V$ with respect to norms $\| \cdot \|_{L^2(\mathbb T^3)}$ and $\| \cdot \|_{H^1(\mathbb T^3)}$ respectively.
In the Fourier expansion, the function spaces, $H$ and $V$ can be represented as follows:
\begin{equation}
\begin{aligned}
H &= \{\bold h \in L^2(\mathbb T^3) :  \hat{\bold h}^0 = 0, k \cdot \hat{\bold h}^k = 0 \text{ for all } k \in \mathbb Z^3\},\\
V &= \{\bold h \in H^1(\mathbb T^3) : \hat{\bold h}^0 = 0, k \cdot \hat{ \bold h}^k = 0 \text{ for all } k \in \mathbb Z^3\}.
\end{aligned}
\end{equation}

%Without loss of generality,
%We assume that $\bold c^0 = 0$ and $k_1 c^k_1 + k_2 c^k_2 + k_3 c^k_3 = 0$ for all $k \in \mathbb Z^3$ by the invariance of $\bf{c}^0$
% and the divergence free condition.

%\begin{equation} \begin{aligned}
%&\mathcal V = \{ \varphi \in C^\infty_0(R^n)^n : \text{div} \varphi = 0 \}\\
%&H= \text{ closure of } \mathcal V \text{ in } L^2(R^n)^n,\\
%&V= \text{ closure of } \mathcal V \text{ in } H^1(R^n)^n,\\
%&| \cdot | = \| \cdot \|_{L^2} = \| \cdot \|_{H},\\
%&\| \cdot \| = \| \cdot \|_{H^1} = \| \cdot \|_{V},\\
%&\| \cdot \|_s = \| \cdot \|_{H^s}.
%\end{aligned} \end{equation}
%
%\begin{enumerate}
%\item
%$P$ : orthogonal projection $P : L^2( \R^n)^n \rightarrow H$.
%\item
%$A = \mathcal D(A) \rightarrow H$, $A = -P \Delta$.
%\item
%For $\varphi$, $\psi \in L^2( \R^n)^n$, $B(\varphi,\psi)_j = \bold \varphi_i D_i \psi_j$.
%\item
%$b(\varphi, \psi, \eta) = \int_{\R^n} B(\varphi, \psi) \cdot \eta dx = \int_{\R^n} \varphi_i D_i \psi_j \eta_j$ dx.
%\end{enumerate}

We let $P$ be the orthogonal projection of $L^2(\mathbb T^3)$ to $H$. Then the weak solution of Navier-Stoke equation is given in the following way,
\begin{definition}
A weak solution of the Navier-Stokes equation \eqref{eq-main-ns} is a function $\bold u \in L^2(0,T;V) \cap L^\infty(0,T;H)$ satisfying $\displaystyle\frac{du}{dt} \in L^1_{\text{loc}}([0,T);V^\prime)$ and
\begin{equation} \label{eq-ns-weak} \begin{cases}
\left(\displaystyle\frac{d\bold u}{dt}, \bold \varphi \right) + \left( A \bold u, \bold \varphi \right) + b(\bold u,\bold u,\bold \varphi) = \left( \bold f, \bold \varphi \right),\qquad \forall\bold \varphi \in V,\\
\bold u(0) = \bold u_0.
\end{cases} \end{equation}
\end{definition}
where $(\cdot,\cdot)$ is the standard inner product of $L^2(\mathbb T^3)$, $A$ is the Stokes operator given by $A = P(-\Delta) : \mathcal D(A) \rightarrow H$, and $b(\cdot,\cdot,\cdot)$ is a trilinear form given by $b(\varphi, \psi, \eta) = \sum_{i,j,k=1}^3\int_{\mathbb R^n} \varphi_i \, \partial_i \psi_j \, \eta_j dx$.  We remark that the test function $\varphi$ in fact can be chosen in $L^\infty(0,T;V)$
since all terms in \eqref{eq-ns-weak} remain meaningful.
It is well-known that there exists a weak solution of Navier-Stokes equations\cite{Le}:
\begin{theorem}
There exists at least a weak solution $\bold u$ of the Navier-Stokes equation \eqref{eq-main-ns} for every $\bold u_0 \in H$, $\bold f \in L^2(0,T;V^\prime)$.
\end{theorem}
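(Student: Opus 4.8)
The plan is to construct the solution by the Galerkin method, in the spirit of the classical Leray--Hopf theory. Since $A=P(-\Delta)$ is self-adjoint, positive, and has compact resolvent on $H$, it admits a complete orthonormal basis of eigenfunctions $\{\bold w_j\}_{j\geq 1}\subset V$ with eigenvalues $0<\lambda_1\leq\lambda_2\leq\cdots\to\infty$ (on the torus these are explicit trigonometric divergence-free vector fields). Writing $P_m$ for the orthogonal projection onto $\mathrm{span}\{\bold w_1,\dots,\bold w_m\}$, I would first seek an approximate solution $\bold u_m(t)=\sum_{j=1}^m g_{jm}(t)\bold w_j$ solving the finite-dimensional system
\begin{equation}
\left(\tfrac{d\bold u_m}{dt},\bold w_j\right)+\left(A\bold u_m,\bold w_j\right)+b(\bold u_m,\bold u_m,\bold w_j)=\left(\bold f,\bold w_j\right),\qquad \bold u_m(0)=P_m\bold u_0,\quad j=1,\dots,m.
\end{equation}
This is a system of ODEs with quadratic, hence locally Lipschitz, nonlinearity, so the Cauchy--Lipschitz theorem yields a unique local solution on a maximal interval $[0,T_m)$.

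The second step is the a priori energy estimate. Taking the test function to be $\bold u_m$ itself and using the antisymmetry $b(\bold u_m,\bold u_m,\bold u_m)=0$ valid for divergence-free fields, together with $(A\bold u_m,\bold u_m)=\|\nabla\bold u_m\|_{L^2}^2$, I obtain
\begin{equation}
\tfrac12\tfrac{d}{dt}\|\bold u_m\|_{L^2}^2+\|\nabla\bold u_m\|_{L^2}^2=(\bold f,\bold u_m)\leq\|\bold f\|_{V^\prime}\|\bold u_m\|_{V},
\end{equation}
so that Young's inequality and Gronwall give bounds uniform in $m$, namely $\bold u_m$ bounded in $L^\infty(0,T;H)\cap L^2(0,T;V)$. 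Since these bounds are independent of $T_m$, the local solution does not blow up and $T_m=T$.

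Third, to prepare for the limit I need an estimate on the time derivative. Writing $B(\bold u,\bold u)\in V^\prime$ for the functional $\bold v\mapsto b(\bold u,\bold u,\bold v)$ and using the three-dimensional interpolation bound $\|B(\bold u_m,\bold u_m)\|_{V^\prime}\leq C\|\bold u_m\|_{L^2}^{1/2}\|\bold u_m\|_{V}^{3/2}$, the a priori estimates yield that $\tfrac{d\bold u_m}{dt}$ is bounded in $L^{4/3}(0,T;V^\prime)$. With $\bold u_m$ bounded in $L^2(0,T;V)$, this time-derivative bound, and the compact embedding $V\hookrightarrow H$, the Aubin--Lions lemma produces a subsequence converging strongly in $L^2(0,T;H)$, while weak and weak-$*$ compactness give $\bold u_m\rightharpoonup\bold u$ in $L^2(0,T;V)$ and weakly-$*$ in $L^\infty(0,T;H)$.

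Finally, I pass to the limit against a test function of the form $\bold\varphi=\psi(t)\bold w_k$ with $\psi\in C^1$. The linear terms converge by weak convergence; the nonlinear term $\int_0^T b(\bold u_m,\bold u_m,\bold\varphi)\,dt$ converges to $\int_0^T b(\bold u,\bold u,\bold\varphi)\,dt$ by combining the strong $L^2(0,T;H)$ convergence of one factor with the weak convergence of the gradient of the other. Density of finite linear combinations of the $\bold w_k$ in $V$ then extends \eqref{eq-ns-weak} to all $\bold\varphi\in V$, and after redefinition on a null set one has $\bold u\in C([0,T];H_{\mathrm{weak}})$, whence $\bold u(0)=\bold u_0$ follows from $P_m\bold u_0\to\bold u_0$ in $H$. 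I expect the main obstacle to be precisely this nonlinear passage to the limit: weak convergence alone cannot control the quadratic term, so the crux is extracting strong $L^2(0,T;H)$ compactness through the time-derivative estimate and Aubin--Lions, which in three dimensions is forced to the borderline exponent $4/3$.
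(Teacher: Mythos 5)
Your construction is correct, but note that the paper does not actually prove this theorem: it is quoted as classical with a bare citation to Leray \cite{Le}. What you have written is the standard Faedo--Galerkin (Hopf-style) proof as found in Constantin--Foias \cite{CF} or Temam, which differs from Leray's original 1934 route (Leray worked in $\mathbb{R}^3$, mollified the convection term, solved the regularized system by a fixed-point argument, and then passed to the limit by compactness); in the torus setting with the spectral basis of the Stokes operator available, the Galerkin scheme is the cleaner choice and is what the modern references the paper leans on actually do. Your proof is complete in all essentials --- the cancellation $b(\bold u_m,\bold u_m,\bold u_m)=0$, the uniform $L^\infty(0,T;H)\cap L^2(0,T;V)$ bounds, the borderline estimate $\| B(\bold u_m,\bold u_m)\|_{V'}\le C\|\bold u_m\|_{L^2}^{1/2}\|\bold u_m\|_{V}^{3/2}$ giving $d\bold u_m/dt$ in $L^{4/3}(0,T;V')$ (which in particular yields the $L^1_{\mathrm{loc}}([0,T);V')$ regularity the paper's definition demands), Aubin--Lions, and the limit in the quadratic term via strong-times-weak convergence. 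Two small technicalities would deserve a line in a full write-up: since $\bold f\in L^2(0,T;V')$, the Galerkin ODE system has only $L^2$-in-time forcing, so local solvability is by Carath\'eodory's theorem (absolutely continuous solutions) rather than Cauchy--Lipschitz proper; and the $V'$ estimate on $d\bold u_m/dt$ uses that the projections $P_m$ are uniformly bounded on $V'$, which holds precisely because you chose the eigenfunction basis of $A$, making $P_m$ self-adjoint and commuting with $A$. Neither is a gap in the idea, and your identification of the nonlinear passage to the limit as the crux is exactly right.
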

This Leray weak solution becomes smooth and unique up to $t=T$ if $\bold u$ belongs to the Serrin class, \eqref{eq-serrin-1} or \eqref{eq-serrin-3} as we mentioned earlier .

Let $c^k(t)$ be the Fourier expansion of $\bold u$ as in \eqref{def-ck} and $\lambda_k = 4\pi^2 |k|^2$. Then $A \bold u$ is
\begin{equation}
A \bold u = \sum_{k \in \mathbb Z^n} \bold  c^k \lambda_k e^{2\pi i k \cdot x}.
\end{equation}
Also, for any $0\leq s<1$, we define $A^s$ by
\begin{equation}
A^s \bold u = \sum_{k \in \mathbb Z^n} \bold  c^k \lambda_k^s e^{2\pi i k \cdot x}.
\end{equation}
For any $N>0$, and the solution $\bold u$ of \eqref{eq-main-ns}, we define $\bf{w}_N$
as in \eqref{def-wN} and
\begin{equation} \label{eq-main-vw}
\begin{aligned}
\bold v_N &= \bold u- \bf{w}_N = \bold v^1 + \bold v^2 + \bold v^3\\
        &= \sum_{ |k_1| \le N} c^k e^{2\pi ik\cdot x} + \sum_{ |k_1| > N, |k_2 | \le N } c^k e^{2\pi ik\cdot x} \\
        &\quad+ \sum_{ |k_1|, |k_2| >N, |k_3| \le N} c^k e^{2\pi ik\cdot x}.
\end{aligned}
\end{equation}
For $\bf{w}_N$, we have the following usual Sobolev embedding theorem\cite{BO}.
\begin{theorem} \label{thm-tec-sob1}
Let $n>1$ be an integer, $2<q<\infty$, and $s>0$ satisfy
\begin{equation}
\frac{2s}{n} = \frac{1}{2} - \frac{1}{q}.
\end{equation}
Then for any $h \in L^2(\mathbb T^n)$, we have the following,
\begin{equation}
\| h\|_{L^q} \le C \left\{ \| (-\Delta)^s h \|_{L^2} + \| h \|_{L^2} \right\}
\end{equation}
where $C$ is a constant depending only on $n$, $s$, and $q$ and, for $0<s<1$,
\begin{equation}
(-\Delta)^{s} h(x) =  \sum_{k \in \mathbb Z^n} \widehat h^k \lambda_k^{s} e^{2\pi i k \cdot x}
\end{equation}
and $\lambda_k = 4 \pi^2 |k|^2$ as before.
\end{theorem}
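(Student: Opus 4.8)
The plan is to reduce the statement to a single convolution estimate for the inverse fractional Laplacian and then to invoke a Hardy--Littlewood--Sobolev (weak Young) inequality. First I would dispose of the zero mode. Writing $h = \widehat h^0 + g$ with $g = h - \widehat h^0$ the mean-zero part, the constant $\widehat h^0$ satisfies $\| \widehat h^0 \|_{L^q} = C_n |\widehat h^0| \le C \| h \|_{L^2}$ on the finite-measure torus, while $(-\Delta)^s g = (-\Delta)^s h$ (since $(-\Delta)^s$ annihilates constants) and $\| g \|_{L^2} \le \| h \|_{L^2}$. Hence it suffices to bound $\| g \|_{L^q}$ by $\| (-\Delta)^s g \|_{L^2}$ for mean-zero $g$; the term $\| h \|_{L^2}$ on the right-hand side is exactly what absorbs the annihilated zero frequency.

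For mean-zero $g$ I set $f = (-\Delta)^s g$, which also has zero mean, so that $g = (-\Delta)^{-s} f = G_s * f$, where $G_s$ is the periodic kernel with Fourier coefficients $\lambda_k^{-s}$ for $k \neq 0$ and $0$ for $k = 0$. The whole estimate then follows from one kernel bound, $G_s \in L^{p,\infty}(\mathbb T^n)$ with $\tfrac1p = 1 - \tfrac{2s}{n}$, combined with the weak Young inequality $\| G_s * f \|_{L^q} \le C \| G_s \|_{L^{p,\infty}} \| f \|_{L^2}$, whose exponents require $\tfrac1q = \tfrac1p - \tfrac12 = \tfrac12 - \tfrac{2s}{n}$, which is precisely the hypothesis $\tfrac{2s}{n} = \tfrac12 - \tfrac1q$. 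Note that $2 < q < \infty$ forces $0 < 2s < n/2 < n$, so the kernel singularity is genuinely of Riesz type and the conjugate exponent satisfies $p \in (1,2)$, keeping us away from the endpoints where the weak Young bound would fail.

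The main work, and the step I expect to be the real obstacle, is the kernel estimate $G_s \in L^{p,\infty}(\mathbb T^n)$, i.e. that $G_s(x) = c_{n,s} |x|^{2s-n} + O(1)$ near the origin and is bounded away from it. I would obtain this by subordination: using $\lambda_k^{-s} = \tfrac{1}{\Gamma(s)} \int_0^\infty t^{s-1} e^{-\lambda_k t}\, dt$ and the periodic heat kernel $\Theta(x,t) = \sum_k e^{-\lambda_k t} e^{2\pi i k \cdot x}$, one gets $G_s(x) = \tfrac{1}{\Gamma(s)} \int_0^\infty t^{s-1} (\Theta(x,t) - 1)\, dt$, the $-1$ subtracting the annihilated zero mode. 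By Poisson summation $\Theta(x,t) = (4\pi t)^{-n/2} \sum_{m \in \mathbb Z^n} e^{-|x-m|^2/4t}$, so the $m = 0$ term over the region $t \sim |x|^2$ reproduces exactly the $\mathbb R^n$ Riesz-potential asymptotics $\int_0^\infty t^{s-1} (4\pi t)^{-n/2} e^{-|x|^2/4t}\, dt = c |x|^{2s-n}$ (the constraint $2s < n$ guaranteeing convergence), while the lattice terms $m \neq 0$ together with the $-1$ contribute a function bounded near the origin. Granting these asymptotics the weak-type membership is immediate and the theorem follows; alternatively one may simply quote the cited reference \cite{BO} for this classical embedding and retain only the exponent bookkeeping recorded above.
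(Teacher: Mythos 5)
Your proposal is correct, but there is no in-paper argument to compare it with: the paper states Theorem \ref{thm-tec-sob1} as the ``usual Sobolev embedding theorem'' and simply cites \cite{BO}, giving no proof of its own. Your blind proof therefore supplies strictly more than the paper does, and it holds up under checking. The zero-mode reduction is handled correctly (the $\| h \|_{L^2}$ term on the right-hand side is indeed what absorbs the frequency annihilated by $(-\Delta)^s$); the exponent bookkeeping is exact, since $2<q<\infty$ gives $0<2s/n<1/2$, hence $2s<n$ and $p\in(1,2)$ for $1/p=1-2s/n$, so both the Riesz-type singularity $|x|^{2s-n}\in L^{p,\infty}$ and the weak Young inequality with $1/q=1/p-1/2$ are used away from their failing endpoints; and the subordination formula with the subtracted zero mode is the standard route to the kernel asymptotics. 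One point to state more carefully in a write-up: the identity $\int_0^\infty t^{s-1}(4\pi t)^{-n/2}e^{-|x|^2/4t}\,dt=c|x|^{2s-n}$ comes from the \emph{full} $m=0$ integral rather than the region $t\sim|x|^2$, and the remainder $\int_0^\infty t^{s-1}\left[\Theta(x,t)-1-(4\pi t)^{-n/2}e^{-|x|^2/4t}\right]dt$ should be bounded in two regimes, with the $m\neq0$ Gaussians of size $O\bigl(t^{-n/2}e^{-c/t}\bigr)$ and the $-1$ integrable (since $s>0$) for $t\le1$, and, for $t\ge1$, exponential decay of $\Theta-1$ from the Fourier side together with integrability of $t^{s-1-n/2}$ at infinity, which again uses $2s<n$. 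This is a matter of presentation, not a gap. Your closing remark is also the honest assessment: for the purposes of this paper one may simply quote \cite{BO}, and your argument is essentially the classical proof of the periodic Sobolev inequality that sits behind that citation.
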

$\bf{v}_N$ possesses infinite number of spectral components and is of 3-dimensional.
However, they satisfies  the following 2-dimensional Sobolev embeddings.
\begin{lemma} \label{lem-tec-v1}
Given any $s$, $0 \le 2s <1$,
\begin{equation} \label{eq-tec-v1}
\| \bold v^i \|_{L^q} \le C N^{1/2} \| A^s  \bold v^i \|_{L^2}, \quad i=1,\cdots ,3
\end{equation}
for $\frac{1}{q} = \frac{1}{2} - s$. Also,
\begin{equation} \label{eq-tec-v4}
\| \bold v_N \|_{L^q} \le C N^{1/2} \| \bold v_N \|_{L^2}^{2/q} \| A^{1/2} \bold v_N \|_{L^2}^{1 -2/q}.
\end{equation}
\end{lemma}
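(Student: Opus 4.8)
The plan is to exploit that each component $\bold v^i$ carries only $O(N)$ active frequencies in its $i$-th coordinate, so that it behaves like a square-summable family of functions on the remaining two-dimensional torus, to which the case $n=2$ of Theorem \ref{thm-tec-sob1} applies. I will establish \eqref{eq-tec-v1} for $\bold v^1$, the cases $i=2,3$ following verbatim after permuting the coordinates, and then deduce \eqref{eq-tec-v4} from \eqref{eq-tec-v1} by interpolation together with the orthogonality of the three pieces.

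For \eqref{eq-tec-v1}, expand in the first variable as $\bold v^1=\sum_{|k_1|\le N} a_{k_1}(x_2,x_3)\,e^{2\pi i k_1 x_1}$, where $a_{k_1}$ is the partial Fourier sum in $(x_2,x_3)$. Since the sum runs over at most $2N+1$ indices, Cauchy--Schwarz in $k_1$ yields the pointwise bound $|\bold v^1(x)|\le C N^{1/2}\,b(x_2,x_3)$ with $b=\big(\sum_{|k_1|\le N}|a_{k_1}|^2\big)^{1/2}$ independent of $x_1$; integrating the $q$-th power in $x_1$ then gives $\|\bold v^1\|_{L^q(\mathbb T^3)}\le CN^{1/2}\|b\|_{L^q(\mathbb T^2)}$, which is the source of the factor $N^{1/2}$. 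Applying the $n=2$ case of Theorem \ref{thm-tec-sob1} with $\frac1q=\frac12-s$ to the $\ell^2$-valued function $(a_{k_1})_{|k_1|\le N}$ (so that $b$ is its pointwise $\ell^2$-norm) bounds $\|b\|_{L^q(\mathbb T^2)}$ by $\|(-\Delta')^s (a_{k_1})\|_{L^2(\mathbb T^2;\ell^2)}+\|(a_{k_1})\|_{L^2(\mathbb T^2;\ell^2)}$, where $-\Delta'=-\partial_{x_2}^2-\partial_{x_3}^2$. By Parseval these two quantities equal $\sum_{|k_1|\le N}(k_2^2+k_3^2)^{2s}|c^k|^2$ and $\sum_{|k_1|\le N}|c^k|^2$ up to fixed constants, and since $(k_2^2+k_3^2)^s\le|k|^{2s}$ the first is dominated by $\|A^s\bold v^1\|_{L^2}^2$. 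The remaining term $\|(a_{k_1})\|_{L^2(\ell^2)}=\|\bold v^1\|_{L^2}$ is absorbed using that $\bold v^1$ has zero mean (as $c^0=0$), whence $\lambda_k^{2s}\ge(4\pi^2)^{2s}$ for $k\neq0$ gives $\|\bold v^1\|_{L^2}\le C\|A^s\bold v^1\|_{L^2}$; the case $s=0$ is trivial. Collecting the estimates proves \eqref{eq-tec-v1}.

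To obtain \eqref{eq-tec-v4}, I first upgrade \eqref{eq-tec-v1} to its Gagliardo--Nirenberg form for each piece. Interpolating the homogeneous norm by H\"older in the Fourier variable gives
\[
\|A^s\bold v^i\|_{L^2}\le\|\bold v^i\|_{L^2}^{1-2s}\,\|A^{1/2}\bold v^i\|_{L^2}^{2s},
\]
and with $s=\frac12-\frac1q$ one has $1-2s=\frac2q$ and $2s=1-\frac2q$, so \eqref{eq-tec-v1} becomes $\|\bold v^i\|_{L^q}\le CN^{1/2}\|\bold v^i\|_{L^2}^{2/q}\|A^{1/2}\bold v^i\|_{L^2}^{1-2/q}$. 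Because the Fourier supports of $\bold v^1,\bold v^2,\bold v^3$ are pairwise disjoint, both $\|\bold v_N\|_{L^2}^2$ and $\|A^{1/2}\bold v_N\|_{L^2}^2$ split as the sums of the respective component norms; hence each component norm is dominated by the corresponding full norm, and since the exponents $2/q$ and $1-2/q$ are nonnegative,
\[
\|\bold v^i\|_{L^2}^{2/q}\|A^{1/2}\bold v^i\|_{L^2}^{1-2/q}\le\|\bold v_N\|_{L^2}^{2/q}\|A^{1/2}\bold v_N\|_{L^2}^{1-2/q}
\]
for each $i$. Summing the three pieces after the triangle inequality $\|\bold v_N\|_{L^q}\le\sum_i\|\bold v^i\|_{L^q}$ yields \eqref{eq-tec-v4} with an extra harmless factor of $3$.

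The step I expect to be the main obstacle is the rigorous passage to two dimensions in the first estimate: Theorem \ref{thm-tec-sob1} is phrased for scalar functions, while $b$ is genuinely an $\ell^2$-valued square function, so one must either invoke the Hilbert-space-valued version of that embedding or, equivalently, run the argument slice by slice in $x_1$ and convert the resulting $L^q_{x_1}$ norm back to an $L^2_{x_1}$ norm by a Nikolskii inequality for trigonometric polynomials of degree $\le 2N$; both routes reproduce the factor $N^{1/2}$ (indeed the sharper $N^{s}$). The accompanying bookkeeping --- verifying that the transverse fractional Laplacian $(-\Delta')^s$ is controlled by $A^s$ uniformly in $N$, and that the zero-mean property is exactly what permits absorbing the lower-order $L^2$ term --- is the delicate part, but it is routine once the two-dimensional reduction is in place.
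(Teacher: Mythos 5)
Your proof is correct and follows essentially the same route as the paper: slice in the constrained frequency variable, apply the two-dimensional embedding (Theorem \ref{thm-tec-sob1} with $n=2$) slice by slice, pay the factor $N^{1/2}$ by Cauchy--Schwarz over the $\le 2N+1$ values of $k_1$, absorb the lower-order $L^2$ term via the zero-mean property, and deduce \eqref{eq-tec-v4} from \eqref{eq-tec-v1} by the same Fourier-side H\"older interpolation $\| A^s \bold v\|_{L^2}\le \|\bold v\|_{L^2}^{1-2s}\|A^{1/2}\bold v\|_{L^2}^{2s}$ together with orthogonality of the pieces. The one step you flag as the main obstacle --- the $\ell^2$-valued version of the embedding applied to the square function $b$ --- dissolves for $q\ge 2$ by Minkowski's integral inequality, $\|b\|_{L^q(\mathbb T^2)}\le\bigl(\sum_{|k_1|\le N}\|a_{k_1}\|_{L^q(\mathbb T^2)}^2\bigr)^{1/2}$, followed by the scalar theorem on each slice, which is precisely the order in which the paper argues (triangle inequality and Cauchy--Schwarz on the $L^q$ norms of the slices first, then the scalar two-dimensional embedding), so no vector-valued machinery is actually needed.
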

\begin{proof}
We first show \eqref{eq-tec-v1}.  Since the proof is exactly parallel, we only show it for $\bold v^1$.
Let $x=(x_1, \overline x) \in \mathbb R\times \mathbb R^2$, $k=(k_1,\overline k) \in \mathbb Z \times \mathbb Z^2$, and
\[ h_{k_1}(\overline x) = \sum_{\overline k \in \mathbb Z^2} \bold c^{(k_1,\overline k)}(t) e^{2 \pi i \overline k \cdot \overline x} \]
where $\bold c^{(k_1,\overline k)}$ is as in \eqref{def-ck}.

Then $\bold v^1$ is represented as follows,
\begin{equation}
\bold v^1(x) = \sum_{|k_1| \le N} e^{2\pi i k_1 x_1} \, \sum_{\overline k \in \mathbb Z^2} \bold c^{(k_1,\overline k)} e^{2 \pi i \overline k \cdot \overline x} = \sum_{|k_1| \le N} e^{2\pi i k_1 x_1} \, h_{k_1}(\overline x).
\end{equation}

From the definition of $h_{k_1}$, the fractional derivative of $h_{k_1}$ is given as
 \begin{equation}
(-\Delta)^s h_{k_1} (\overline x) = \sum_{\overline k \in \mathbb Z^2} \bold c^{(k_1,\overline k)}(t) (\lambda_{\overline k})^s e^{2 \pi i \overline k \cdot \overline x}
\end{equation}
where $\lambda_{\overline k} = 4\pi^2 |\overline k|^2$.

Now we are comparing the $L^2$-norm of $\| A^s  \bold v^1(x) \|_{L^2}^2$ and $\sum_{|k_1| \le N} \| (-\Delta)^s h_{k_1} \|_{L^2}$. Since
\begin{equation} \begin{aligned}
&\| A^s  \bold v^1(x) \|_{L^2}^2 = \sum_{|k_1| \le N} \, \sum_{\overline k} \bold (|k_1|^2 + |\overline k |^2)^s | c^{(k_1,\overline k)} |^2\\
&\qquad= \sum_{|k_1| \le N} \,\left\{ \sum_{\overline k \neq 0} \frac{(|k_1|^2 + |\overline k |^2)^s}{|\overline k|^{2s}} |\overline k|^{2s} | c^{(k_1,\overline k)} |^2 + |k_1|^{2s} | c^{(k_1,0)} |^2 \right\},\\
\end{aligned} \end{equation}
and $|k_1| \geq 1$ in the latter term due to the average zero property of $\bold u$,
we have
\begin{equation}
\sum_{|k_1| \le N} \left\{ \| (-\Delta)^s h_{k_1} \|_{L^2}^2 + \| h_{k_1} \|_{L^2}^2 \right\} \le C \| A^s  \bold v^1(x) \|_{L^2}^2.
\end{equation}
In particular, by the Minkowski inequality, the Cauchy-Schwarz inequality, and Theorem \ref{thm-tec-sob1} with $n=2$, we have
\begin{equation}
\begin{aligned}
\| \bold v^1 \|_{L^q}^2 &\le \left(\sum_{|k_1| \le N} \| h_{k_1} \|_{L^q} \right)^2 \le (2N+1) \sum_{|k_1| \le N} \| h_{k_1} \|_{L^q}^2 \\
& \le C N\sum_{|k_1| \le N} \left\{ \| (-\Delta)^s h_{k_1} \|_{L^2}^2 + \| h_{k_1} \|_{L^2}^2 \right\} \\
&\le C N \| A^s  \bold v^1(x) \|_{L^2}^2
\end{aligned}
\end{equation}
for $s=1/2-1/q$.  Here, $C$ is a constant depending only on $q$.

Next, we show \eqref{eq-tec-v4}.
By the $l^{1/s}-l^{1/(1-s)}$ duality, we have
\begin{align*}
\| A^{s} \bold v_N \|_{L^2}^2 &= \sum_k |\bold c^k|^2 |\lambda_k|^{2s}
                   =  \sum_k (|\bold c^k|^{2} |\lambda_k|)^{2s} \; |\bold c^k|^{2(1-2s)} \\
                   & \leq (\sum_k |\bold c^k|^{2} |\lambda_k|)^{2s} (\sum_k |\bold c^k|^{2} )^{1-2s} \\
              & = \| A^{1/2} \bold v_N \|_{L^2}^{4s} \| \bold v_N \|_{L^2}^{2-4s}.
\end{align*}
Hence, together with \eqref{eq-tec-v1}, we finish the proof.
\end{proof}
We finish this section by introducing the theorem.
\begin{theorem} [\cite{G}]\label{thm-tec-sob2}
Let $n$ be any positive integer and $q$, $r$ be constants satisfying $q \in [r, nr/(n-r)]$ if $r \in [1,n)$, and
$q \in [r,\infty)$, if $r \ge n$.
Then, for all $\varphi$ such that $\varphi \in L^r(\mathbb T^n)$ and $\nabla \varphi \in L^r(\mathbb T^n)$, we have
\begin{equation} \label{eq-thm-tec-sob2}
\| \varphi \|_{L^q} \le C(n,r,q) \| \varphi \|_{L^r}^{a} \| \nabla \varphi \|_{L^r}^{1-a}
\end{equation}
where
$\displaystyle\frac{1}{q} = \frac{a}{r} + (1-a)\left( \frac{1}{r} - \frac{1}{n} \right)$.
\end{theorem}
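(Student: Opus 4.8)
The plan is to deduce this interpolation inequality from two classical ingredients: the endpoint Sobolev embedding and Hölder interpolation of Lebesgue norms. First I would observe that the exponent relation $\frac{1}{q} = \frac{a}{r} + (1-a)\left(\frac{1}{r}-\frac{1}{n}\right)$ says precisely that $\frac{1}{q}$ is the convex combination, with weights $a$ and $1-a$, of $\frac{1}{r}$ and $\frac{1}{r^*}$, where $r^* = \frac{nr}{n-r}$ is the Sobolev conjugate of $r$ when $r<n$. Thus as $a$ runs over $[0,1]$ the exponent $q$ sweeps the admissible range $[r,r^*]$, so it suffices to control the two endpoints, namely $L^r$ (trivial) and $L^{r^*}$ (the Sobolev endpoint), and then interpolate between them.

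For the Sobolev endpoint I would prove $\|\varphi\|_{L^{r^*}} \le C\|\nabla\varphi\|_{L^r}$ by the standard Gagliardo--Nirenberg argument. The base case $r=1$, giving $\|\varphi\|_{L^{n/(n-1)}} \le C\prod_{i=1}^n\|\partial_i\varphi\|_{L^1}^{1/n}$, follows by bounding $|\varphi(x)|$ by the integral of $|\partial_i\varphi|$ in each coordinate direction, multiplying the $n$ resulting estimates, raising to the power $\frac{1}{n-1}$, and integrating successively in each variable with the generalized Hölder inequality. To reach a general $r\in[1,n)$ I would apply this base inequality to $|\varphi|^\gamma$ with $\gamma=\frac{(n-1)r}{n-r}$, use $\nabla(|\varphi|^\gamma)=\gamma|\varphi|^{\gamma-1}\nabla\varphi$, split the right-hand side by Hölder with exponents $r$ and $r'=\frac{r}{r-1}$, and rearrange; the value of $\gamma$ is forced so that $(\gamma-1)r'=\gamma\frac{n}{n-1}=r^*$, which lets the resulting power of $\|\varphi\|_{L^{r^*}}$ be divided out, leaving exactly $\|\varphi\|_{L^{r^*}} \le C\|\nabla\varphi\|_{L^r}$.

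The final step is Hölder interpolation: for $q\in[r,r^*]$ one has $\|\varphi\|_{L^q} \le \|\varphi\|_{L^r}^{a}\|\varphi\|_{L^{r^*}}^{1-a}$ with $a$ given by the displayed exponent relation, and inserting the Sobolev endpoint bound yields $\|\varphi\|_{L^q} \le C\|\varphi\|_{L^r}^a\|\nabla\varphi\|_{L^r}^{1-a}$. The case $r\ge n$ is handled by the same bootstrap, the target range becoming $q\in[r,\infty)$. The main obstacle is the torus setting: the homogeneous, scale-invariant form I am after fails for nonzero constants, so the clean endpoint $\|\varphi\|_{L^{r^*}} \le C\|\nabla\varphi\|_{L^r}$ must be established on the mean-zero subspace, where the \emph{Poincar\'e} inequality absorbs the lower-order term $\|\varphi\|_{L^r}$ that the periodic Sobolev embedding would otherwise carry. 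This is exactly the regime needed here, since the fields $\bold v^i$ and $\bold v_N$ to which the inequality is applied are mean-zero; off the mean-zero subspace one only obtains the inhomogeneous version $\|\varphi\|_{L^q}\le C(\|\varphi\|_{L^r}+\|\nabla\varphi\|_{L^r})$.
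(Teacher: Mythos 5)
Your proposal is correct, but there is nothing in the paper to compare it against: the authors state this theorem with a citation to Galdi's book \cite{G} and give no proof, so any comparison is with the standard literature argument, which is exactly what you reconstruct (endpoint Sobolev embedding obtained by the Nirenberg loop-integral argument for $r=1$, bootstrapped to general $r$ by applying the base case to $|\varphi|^{\gamma}$ with $\gamma=\frac{(n-1)r}{n-r}$, then H\"older interpolation between $L^r$ and $L^{r^*}$; your exponent bookkeeping, in particular $(\gamma-1)r'=\gamma\frac{n}{n-1}=r^*$ and the identification of $\frac{1}{q}$ as the convex combination of $\frac{1}{r}$ and $\frac{1}{r^*}$, is right). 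The most valuable part of your write-up is the torus caveat, and you are correct that it is not cosmetic: as literally stated the theorem is \emph{false} on $\mathbb{T}^n$, since $\varphi\equiv 1$ gives $\|\varphi\|_{L^q}=1$ while the right-hand side vanishes whenever $a<1$. The inequality holds only on the mean-zero subspace, where Poincar\'e upgrades the inhomogeneous periodic embedding $\|\varphi\|_{L^{r^*}}\le C(\|\varphi\|_{L^r}+\|\nabla\varphi\|_{L^r})$ to the homogeneous endpoint, exactly as you say; this is harmless for the paper because every function to which \eqref{eq-thm-tec-sob2} is applied ($\bold v_N$, $\bold v^i$, and gradients $\nabla\bold u$, $\nabla\bold v$, $\nabla\bold w$ of periodic fields) has zero mean, but the hypothesis should properly appear in the statement. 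One small detail to tidy: the coordinate-wise bound $|\varphi(x)|\le\int|\partial_i\varphi|\,dx_i$ in your base case is the $\mathbb{R}^n$ version and needs the periodic modification $|\varphi(x)|\le\int(|\varphi|+|\partial_i\varphi|)\,dx_i$ (or an argument exploiting a zero of $\varphi$ on each loop), which is what produces the inhomogeneous form that your Poincar\'e step then absorbs; with that adjustment the proof is complete, including the bootstrap for $r\ge n$.
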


The inequality \eqref{eq-tec-v4} is in fact the inequality \eqref{eq-thm-tec-sob2} with $n=2$.
This is why $\bold v$ has 2-dimensional properties.

\section{Spectral Serrin condition}\label{main}
%%%%%%%%%%%%%%%%%%%%%%%%%%%%%%%%%%%%%%%%%%%%%%%%%%%%%

We now state our main theorem and show it here.
\begin{theorem} \label{thm-main}
Suppose that $\bold u$ is a weak solution of the 3D Navier-Stokes equation \eqref{eq-main-ns} and satisfies one of the Serrin conditions \eqref{eq-serrinN-1} - \eqref{eq-serrinN-3}. Then $\bold u$ is a classical solution up to $t=T$.
\end{theorem}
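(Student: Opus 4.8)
The plan is to promote the a priori Leray bounds $\bold u\in L^\infty(0,T;H)\cap L^2(0,T;V)$ to the enstrophy bound $\bold u\in L^\infty(0,T;V)$; once this is in hand, the classical bootstrap for \eqref{eq-main-ns} upgrades $\bold u$ to a classical solution on $(0,T]$. The first move is to exploit that the hypothesis is a $\liminf$: I would fix once and for all a single, possibly very large, cutoff $N$ for which $\|Q_N(\bold u)\|_{L^{q,r}(0,T)}$ (resp. $\|\nabla Q_N(\bold u)\|_{L^{q,r}(0,T)}$) is finite, and split $\bold u=\bold v_N+\bold w_N$ as in \eqref{eq-main-vw}. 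The whole scheme rests on treating $\bold w_N$ through its finite Serrin norm and $\bold v_N$ through the two-dimensional embeddings \eqref{eq-tec-v1}--\eqref{eq-tec-v4}; the factor $N^{1/2}$ appearing there is irrelevant because $N$ is now frozen.

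The engine is the enstrophy identity obtained by testing \eqref{eq-ns-weak} against $A\bold u$,
\[
\tfrac12\tfrac{d}{dt}\|A^{1/2}\bold u\|_{L^2}^2+\nu\|A\bold u\|_{L^2}^2=-b(\bold u,\bold u,A\bold u)+(\bold f,A\bold u),
\]
which I would make rigorous either through the Galerkin approximations or through a continuation argument: assume the solution is already smooth on $[0,T_*)$, derive an a priori bound independent of $T_*$, and let $T_*\uparrow T$. The forcing is absorbed in the usual way by $\epsilon\|A\bold u\|_{L^2}^2+C\|\bold f\|_{L^2}^2$ (assuming $\bold f\in L^2(0,T;H)$), and, crucially, since $\bold v_N$ and $\bold w_N$ have disjoint Fourier support the dissipation splits orthogonally, $\|A\bold u\|_{L^2}^2=\|A\bold v_N\|_{L^2}^2+\|A\bold w_N\|_{L^2}^2$, so that small multiples of either piece may be absorbed on the left.

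Everything then reduces to estimating $b(\bold u,\bold u,A\bold u)$ after expanding $\bold u=\bold v_N+\bold w_N$ in each of its three slots. The pieces in which $\bold w_N$ occupies the transport slot (and, for \eqref{eq-serrinN-3}, the differentiated slot, so that $\|\nabla\bold w_N\|_{L^q}$ appears) are the easy ones: H\"older with the Serrin exponent relation, the Gagliardo--Nirenberg inequality (Theorem \ref{thm-tec-sob2}), and Young's inequality bound them by $\epsilon\|A\bold u\|_{L^2}^2+C\|\bold w_N\|_{L^q}^r\|A^{1/2}\bold u\|_{L^2}^2$, whose coefficient lies in $L^1(0,T)$ by the choice of $N$. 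The pieces built only from $\bold v_N$, together with the remaining mixed pieces, are instead controlled by \eqref{eq-tec-v1}--\eqref{eq-tec-v4}: here the objective is to choose the H\"older exponents so that, after Young, the top-order derivative is soaked up by $\|A\bold v_N\|_{L^2}^2$ while the surviving Gronwall coefficient is exactly the enstrophy $\|A^{1/2}\bold v_N\|_{L^2}^2\le\|A^{1/2}\bold u\|_{L^2}^2$, which belongs to $L^1(0,T)$ by the energy inequality. Summing the two families yields $\frac{d}{dt}\|A^{1/2}\bold u\|_{L^2}^2\le g(t)\|A^{1/2}\bold u\|_{L^2}^2+h(t)$ with $g\in L^1(0,T)$, and Gronwall gives $\bold u\in L^\infty(0,T;V)$.

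The hard part is precisely this balancing in the $\bold v_N$ family. A naive bound of the transport term $b(\bold v_N,\bold u,A\bold u)$ that uses the three-dimensional Gagliardo--Nirenberg inequality for $\nabla\bold u$ leaves a coefficient $\|A^{1/2}\bold v_N\|_{L^2}^{\beta}$ with $\beta=2(q-2)/(q-3)>2$, which the energy-level bound $A^{1/2}\bold v_N\in L^2(0,T;L^2)$ cannot control and which would only yield a Riccati-type inequality admitting finite-time blow-up. The resolution must lean on the genuinely two-dimensional gain of Lemma \ref{lem-tec-v1}, namely that $\bold v_N$ costs only the single power $N^{1/2}\|A^{s}\bold v_N\|_{L^2}$ rather than a three-dimensional interpolation, so that enough derivatives can be transferred onto the dissipation $\|A\bold v_N\|_{L^2}^2$ to leave the surviving power of $\|A^{1/2}\bold v_N\|_{L^2}$ equal to $2$ --- exactly the device that makes two-dimensional Navier--Stokes globally regular. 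Carrying this through uniformly for every mixed term, together with the usual care at the borderline exponent $q=3$, $r=\infty$ (where $\|\bold w_N\|_{L^3}$ must be made small on short time subintervals rather than merely integrated), is the technical core of the argument.
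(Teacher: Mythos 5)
Your blueprint for the two non-borderline cases is essentially the paper's proof: fix a single $N$ realizing the finite $\liminf$, justify the enstrophy identity via local strong solutions/continuation, bound the terms carrying $\bold w_N$ by H\"older, Theorem \ref{thm-tec-sob2} and Young with the $L^1(0,T)$ coefficient $\|\bold w_N\|_{L^q}^{r}$ (resp. $\|\nabla \bold w_N\|_{L^q}^{1/a}$), and control the pure $\bold v_N$ term by the two-dimensional gain of Lemma \ref{lem-tec-v1}, which in the paper takes the concrete form
\begin{equation*}
|b(\bold v,\bold v,A\bold v)| \le CN \|\bold v\|_{L^2}^{1/2}\|\nabla \bold v\|_{L^2}\|A\bold v\|_{L^2}^{3/2} \le CN^4\|\bold v\|_{L^2}^2\|\nabla\bold v\|_{L^2}^4 + \delta\|A\bold v\|_{L^2}^2,
\end{equation*}
leaving the Gronwall coefficient $\|\bold v\|_{L^2}^2\|\nabla\bold v\|_{L^2}^2\in L^1(0,T)$ --- exactly your ``2D device.'' One sketch-level omission: in case \eqref{eq-serrinN-1} only $\|\bold w_N\|_{L^q}$ is available, so the mixed terms in which the derivative lands on $\bold w_N$ (such as $\int \bold v\,\nabla\bold w\,A\bold w$ and $\int \bold v\,\nabla\bold w\,A\bold v$) are not covered by \eqref{eq-tec-v1}--\eqref{eq-tec-v4} alone; the paper disposes of them by the integration-by-parts manipulations \eqref{eq-thm-23}--\eqref{eq-thm-25}, which move all derivatives off $\bold w$. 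This is repairable within your scheme.

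The genuine gap is your treatment of the endpoint $q=3$ of \eqref{eq-serrinN-1}, where $r=\infty$. The device you invoke --- making $\|\bold w_N\|_{L^3}$ small ``on short time subintervals'' --- is precisely what $r<\infty$ buys through absolute continuity of $\int \|\bold w_N\|_{L^q}^r\,dt$, and it fails at $r=\infty$: the quantity $\sup_{t\in I}\|\bold w_N(t)\|_{L^3}$ does not shrink as $|I|\to 0$, and the $\liminf$ hypothesis gives boundedness, never smallness, in $N$. Had such an absorption argument worked, it would yield an elementary proof of the $L^\infty_t L^3_x$ regularity criterion, which is known to require the backward-uniqueness machinery of Escauriaza--Seregin--\v{S}ver\'ak. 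The paper instead runs a genuinely different, lower-order estimate in this case: it tests the equation with $A^{1/3}\bold v$ rather than $A\bold u$, estimates $b(\bold u,\bold u,A^{1/3}\bold v)$ using the 2D embeddings $\|\bold v\|_{L^3}\le C_N\|A^{1/6}\bold v\|_{L^2}$ and $\|A^{1/3}\bold v\|_{L^6}\le C_N\|A^{2/3}\bold v\|_{L^2}$, so that the Gronwall coefficient is only $\|\nabla\bold u\|_{L^2}^2\in L^1(0,T)$, available at the energy level (Proposition \ref{prop-main-v1}). This yields $\bold v\in L^\infty(0,T;L^3)$, hence $\bold u=\bold v+\bold w\in L^\infty(0,T;L^3)$, and regularity then follows from the endpoint criterion of \cite{SS} --- an external deep theorem, not an absorption argument. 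Note also that the paper reduces the endpoint $q=3/2$ of \eqref{eq-serrinN-3} to this same case via $\nabla\bold w\in L^{3/2,\infty}\Rightarrow \bold w\in L^{3,\infty}$. As written, your proof establishes the theorem only for $q>3$ in \eqref{eq-serrinN-1} and $q>3/2$ in \eqref{eq-serrinN-3}.
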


%%%%%%%%%%%%%%%%%%%%%%%%%%%%%%%%%%%%%%%%%%%%%%%%%%%%%
\subsection{Proof for \eqref{eq-serrinN-3} with $q>3/2$}
%%%%%%%%%%%%%%%%%%%%%%%%%%%%%%%%%%%%%%%%%%%%%%%%%%%%%

By \eqref{eq-serrinN-3}, there exists $N>0$ such that $\nabla \bold w_N \in L^r(0,T;L^q(\mathbb T^3)),\, \displaystyle\frac{2}{r} + \frac{3}{q} = 2,\, q \in (3/2,\infty]$.
For a weak solution, for almost every $0<s<T$, $\bold u (s) \in V$.  Then, for some $S \in (s, T]$, $\bold u$ becomes smooth for $t \in [s, S)$ by the local unique existence of a strong solution\cite{CF}.   By this local existence of strong solutions,  we only need to show that
$\bold u(S) \in V$ for all such $s$.
Since the proof is exactly the same, we show it replacing $[s,S)$ with $[0,T)$ for convenience' sake.
This means $\bold u (t)$ is smooth for $t<T$.
By applying $\varphi = A\bold u$ to the equation \eqref{eq-ns-weak}, we get
\begin{equation} \label{eq-thm-11}
\frac{1}{2}\frac{d}{dt} \| A^{1/2} \bold u \|_{L^2}^2 + \nu \| A \bold u \|_{L^2}^2 = -b(\bold u, \bold u, A \bold u) + (\bold f, A \bold u).
\end{equation}
Since $(\bold f, A \bold u)$ does not make any technical difficulties, we assume $\bold f =0$.  For convenience, we denote
 $\bold w=\bold w_N$ and $\bold v =\bold v_N$.  By integration by parts, we have
\begin{equation}
\begin{aligned}
&\int (\bold w \, \nabla \bold u) \, A \bold u \, dx = \int (\bold w \, \nabla \bold u) \, P(-\Delta) \bold u \, dx \\
&\qquad= \int \bold w^i \, \partial_i \bold u^j \, (- \partial_k^2 \bold u^j ) \, dx = \int \partial_k\bold w^i \, \partial_i \bold u^j \, \partial_k \bold u^j \, dx.\\
\end{aligned}
\end{equation}

Hence, from the H\"{o}lder inequality and Theorem \ref{thm-tec-sob2}, we have
\begin{equation} \label{eq-thm-12}
\begin{aligned}
&\left| \int (\bold w \, \nabla \bold u) \, A \bold u \, dx \right| = \left| \int \partial_k\bold w^i \, \partial_i \bold u^j \, \partial_k \bold u^j \, dx \right| \le \| \nabla \bold w \|_{L^q} \| \nabla \bold u \|_{L^p}^2\\
&\qquad\le \| \nabla \bold w \|_{L^q} \| \nabla \bold u \|_{L^2}^{2a} \| \nabla^2 \bold u \|_{L^2}^{2-2a} \le C \| \nabla \bold w \|_{L^q}^{1/a} \| \nabla \bold u \|_{L^2}^{2} + \delta \| A \bold u \|_{L^2}^{2}
\end{aligned}
\end{equation}
Here $p$ and $a$ are chosen to satisfy $\displaystyle\frac{1}{q} + \frac{2}{p} = 1$ and $\displaystyle\frac{1}{p} = \frac{a}{2} + (1-a) \left( \frac{1}{2} - \frac{1}{3} \right)$. It follows that
\begin{equation}
\frac{3}{q} + \frac{2}{1/a} = \frac{3}{q} + 2(1-\frac{3}{2q})= 2
\end{equation}
are satisfied for $q >3$.

We also have by integration by parts,
\begin{equation}
\begin{aligned}
&\left| \int (\bold v \, \nabla \bold u) \, A \bold u \, dx \right| = \left| \int \partial_k\bold v^i \, \partial_i \bold u^j \, \partial_k \bold u^j \, dx \right|\\
&\quad\le \left| \int \partial_k\bold v^i \, \partial_i \bold v^j \, \partial_k \bold v^j \, dx \right| + \left| \int \partial_k\bold v^i \, \partial_i \bold v^j \, \partial_k \bold w^j \, dx \right| + \left| \int \partial_k\bold v^i \, \partial_i \bold w^j \, \partial_k \bold u^j \, dx \right|.
\end{aligned}
\end{equation}
Since
\begin{equation}
\begin{aligned}
&\left| \int \partial_k\bold v^i \, \partial_i \bold v^j \, \partial_k \bold w^j \, dx \right| + \left| \int \partial_k\bold v^i \, \partial_i \bold w^j \, \partial_k \bold u^j \, dx \right| \\
&\qquad\qquad\le \| \nabla \bold v \|_{L^p} \| \nabla \bold v \|_{L^p} \| \nabla \bold w \|_{L^q} + \| \nabla \bold v \|_{L^p} \| \nabla \bold w \|_{L^q} \| \nabla \bold u \|_{L^p} \\
&\qquad\qquad\le 2 \| \nabla \bold w \|_{L^q} \| \nabla \bold u \|_{L^p}^2,
\end{aligned}
\end{equation}
we get
\begin{equation} \label{eq-thm-13}
\left| \int \partial_k\bold v^i \, \partial_i \bold v^j \, \partial_k \bold w^j \, dx \right| + \left| \int \partial_k\bold v^i \, \partial_i \bold w^j \, \partial_k \bold u^j \, dx \right| \le C  \| \nabla \bold w \|_{L^q}^{1/a} \| \nabla \bold u \|_{L^2}^{2} + 2\delta \| A \bold u \|_{L^2}^{2}
\end{equation}
by repeating arguments in \eqref{eq-thm-12}.

Moreover, because of \eqref{eq-tec-v4}, we have
\begin{equation} \label{eq-thm-14}
\begin{aligned}
\left| \int \partial_k\bold v^i \, \partial_i \bold v^j \, \partial_k \bold v^j \, dx \right| &= \left| \int (\bold v \, \nabla \bold v) \, A \bold v \, dx \right| \le  \| \bold v \|_{L^4} \| \nabla \bold v \|_{L^4} \| A \bold v \|_{L^2}\\
&\le C N \| \bold v \|_{L^2}^{1/2} \| \nabla \bold v \|_{L^2} \| A \bold v \|_{L^2}^{3/2}\\
&\le C N^4 \| \bold v \|_{L^2}^2 \| \nabla \bold v \|_{L^2}^4 + \delta \| A \bold v \|_{L^2}^2.
\end{aligned}
\end{equation}

By combining \eqref{eq-thm-11}, \eqref{eq-thm-12}, \eqref{eq-thm-13}, and \eqref{eq-thm-14}, we have
\begin{equation}
\frac{1}{2}\frac{d}{dt} \| A^{1/2} \bold u \|_{L^2}^2 + \nu \| A \bold u \|_{L^2}^2 \le C N^4 \left( \| \nabla \bold w \|_{L^q}^{1/a} + \| \bold v \|_{L^2}^2 \| \nabla \bold v \|_{L^2}^2 \right) \| \nabla \bold u \|_{L^2}^{2} + 4 \delta \| A \bold u \|_{L^2}^{2} .
\end{equation}

Let $Y(t) = \| A^{1/2} \bold u \|_{L^2}^2$ and take $\delta = \nu/8$. Then we have
\begin{equation} \label{eq-thm-15}
\frac{d}{dt} Y(t) + \nu |A \bold u|^2 \le C N^4 \left( \| \nabla \bold w \|_{L^q}^{1/a} + \| \bold v \|_{L^2}^2 \| \nabla \bold v \|_{L^2}^2 \right) Y(t).
\end{equation}
From the above inequality \eqref{eq-thm-15} and the Gronwall's inquality, we have
\begin{equation}
Y(t) + \nu \int_0^T \| A \bold u(\tau) \|_{L^2}^2 d\tau \le Y(0) \exp\left\{ CN^4  \int_0^T\| \nabla \bold w \|_{L^q}^{1/a}
                  + \| \bold v \|_{L^2}^2 \| \nabla \bold v\|_{L^2}^2 d\tau \right\}
\end{equation}
and this inequality tells us that $\bold u$ is smooth.

%%%%%%%%%%%%%%%%%%%%%%%%%%%%%%%%%%%%%%%%%%%%%%%%%%%%%
\subsection{Proof for \eqref{eq-serrinN-1} with $q>3$}
%%%%%%%%%%%%%%%%%%%%%%%%%%%%%%%%%%%%%%%%%%%%%%%%%%%%%
For given $q>3$, choose $p$ and $\alpha$ to satisfy,
\begin{equation}
\frac{1}{q} + \frac{1}{p} = \frac{1}{2}, \, \text{ and } \, \frac{1}{p} = \frac{\alpha}{2} + (1-\alpha)\left(\frac{1}{2} - \frac{1}{3}\right).
\end{equation}
Then, from the H\"{o}lder inequality, we have
\begin{equation} \label{eq-thm-21}
\begin{aligned}
\int \bold w \nabla \bold u A \bold u dx &\le \| \bold w \|_{L^q} \| \nabla \bold u \|_{L^p} \| A \bold u \|_{L^2}\\
&\le \| \bold w \|_{L^q} \| \nabla \bold u \|_{L^2}^\alpha \| A \bold u \|_{L^2}^{2-\alpha}\\
&\le C \| \bold w \|_{L^q}^{2/\alpha} \| \nabla \bold u \|_{L^2}^2 + \delta \| A \bold u \|_{L^2}^2.\\
\end{aligned}
\end{equation}

We note that $q$ and $2/\alpha$ satisfy
\begin{equation}
\frac{3}{q} + \frac{2}{2/\alpha} = 1.
\end{equation}

By using \eqref{eq-tec-v4}, we have
\begin{equation} \label{eq-thm-22}
\begin{aligned}
\int \bold v \nabla \bold v A \bold u dx &\le C \| \bold v \|_{L^4} \| \nabla \bold v \|_{L^4} \| A \bold v \|_{L^2}\\
&\le  CN \| \bold v \|_{L^2}^{1/2} \| \nabla \bold v \|_{L^2} \| A \bold v \|_{L^2}^{3/2} \\
&\le  CN^4 \| \bold v \|_{L^2}^2 \| \nabla \bold v \|_{L^2}^4 + \delta \| A \bold v \|_{L^2}^2. \\
\end{aligned}
\end{equation}

By applying integration by parts twice to $\int \bold v \nabla \bold w A \bold w dx$, we have
\begin{equation} \label{eq-thm-23}
\begin{aligned}
\int \bold v \nabla \bold w A \bold w dx &= \sum_{i,j,k=1}^3 \int \bold v^i \partial_i \bold w^j (-\partial_k^2 \bold w^j) dx = \sum_{i,j,k=1}^3 \int \partial_k \bold v^i \partial_i \bold w^j \partial_k \bold w^j dx \\
&=- \sum_{i,j,k=1}^3 \left\{ \int \partial_k^2 \bold v^i \partial_i \bold w^j \bold w^j dx + \int \partial_k \bold v^i \partial_i \partial_k \bold w^j \bold w^j dx \right\}.
\end{aligned}
\end{equation}
Hence, by using the similar calculations in \eqref{eq-thm-21}, we can obtain
\begin{equation} \label{eq-thm-24}
\int \bold v \nabla \bold w A \bold w dx \le  C \| \bold w \|_{L^q}^{2/\alpha} \| \nabla \bold u \|_{L^2}^2 + \delta \| A \bold u \|_{L^2}^2.
\end{equation}
Finally, by integration by parts, we have
\begin{equation} \label{eq-thm-25}
\begin{aligned}
\int \bold v \nabla \bold w A \bold v dx &= \sum_{i,j,k=1}^3 \int \bold v^i \partial_i \bold w^j (-\partial_k^2 \bold v^j) dx \\
&= \sum_{i,j,k=1}^3 \left\{ \int \partial_k \bold v^i \partial_i \bold w^j \partial_k \bold v^j dx + \int \bold v^i \partial_i \partial_k \bold w^j \partial_k \bold v^j dx \right\}\\
&= \sum_{i,j,k=1}^3 \left\{ - \int \partial_k \bold v^i \bold w^j \partial_k \partial_i \bold v^j dx + \int \bold v^i \partial_i \partial_k \bold w^j \partial_k \bold v^j dx \right\}\\
\end{aligned}
\end{equation}

The last two terms are bounded by
\begin{equation} \label{eq-thm-26}
 C \| \bold w \|_{L^q}^{2/\alpha} \| \nabla \bold u \|_{L^2}^2 + C \| \bold v \|_{L^2}^2 \| \nabla \bold v \|_{L^2}^4 + 2 \delta \| A \bold v \|_{L^2}^2
\end{equation}
by the similar calculation in \eqref{eq-thm-21} and \eqref{eq-thm-22}.

By combining \eqref{eq-thm-11}, \eqref{eq-thm-21}, \eqref{eq-thm-22}, \eqref{eq-thm-24}, and \eqref{eq-thm-26}, we have
\begin{equation}
\frac{1}{2}\frac{d}{dt} \| A^{1/2} \bold u \|_{L^2}^2 + \nu \| A \bold u \|_{L^2}^2 \le CN^4 \left( \| \bold w \|_{L^q}^{2/\alpha} + \| \bold v \|_{L^2}^2 \| \nabla \bold v \|_{L^2}^2 \right) \| \nabla \bold u \|_{L^2}^{2} + 4 \delta \| A \bold u \|_{L^2}^{2} .
\end{equation}

Hence $\| A^{1/2} \bold u \|_{L^2}^2 $ is bounded and it implies that $\bold u$ is smooth.

%%%%%%%%%%%%%%%%%%%%%%%%%%%%%%%%%%%%%%%%%%%%%%%%%%%%%
\subsection{Proof for \eqref{eq-serrinN-1} with $q=3$}
%%%%%%%%%%%%%%%%%%%%%%%%%%%%%%%%%%%%%%%%%%%%%%%%%%%%%
We note that if $\bold w$ satisfies condition \eqref{eq-serrinN-1} with $q=3$, then the above calculation does not work because $a=0$.
We also remark that $\nabla {\bf w} \in L^{3/2,\infty}$ implies ${\bf w} \in L^{3,\infty}$.  So, we treat only the case \eqref{eq-serrinN-1} with $q=3$ separately.  We make use of \cite{SS} in this case.
We denote $\bold w_N= Q_N(\bold u)$ and $\bold v_N = P_N(\bold u_N) = (I-Q_N)(\bold u_N)$ as in \eqref{def-wN}.  This decomposition is in fact an orthogonal decomposition  and the equation satisfied by $\bold v$ is
\begin{equation} \label{eq-main-v}
\displaystyle\frac{d \bold v}{dt} + \nu A \bold  v = P_N \bold f - P_N B(\bold v + \bold w,\bold v + \bold w)
\end{equation}
and the equation satisfied by $\bold w$ is
\begin{equation} \label{eq-main-w}
\displaystyle\frac{d \bold w}{dt} + \nu A \bold  w = Q_N \bold f - Q_N B(\bold v + \bold w,\bold v + \bold w).
\end{equation}
We remark that the equation \eqref{eq-main-v} tell us that if $\bold w$ is regular, then the solution $\bold v$ is also regular. This procedure has not been exposed so far, but we will see how it operates more explicitly in the procedure of the proof.
The main idea for $q =3$ is to take $\varphi = A^{1/3} v$ in the equation \eqref{eq-ns-weak} instead of $A^{1/2}v$.   Then we get the following equation,
\begin{equation} \label{eq-prop-main-v1}
\frac{1}{2} \frac{d}{dt} \| A^{1/6} \bold v \|_{L^2}^2 + \nu \| A^{2/3} \bold v \|_{L^2}^2 = - b(\bold u,\bold u,A^{1/3} \bold v).
\end{equation}
\begin{prop} \label{prop-main-v1}
If $\bold w_N$ satisfies \eqref{eq-serrinN-1} with $q=3$, then
\begin{equation}
A^{1/6} \bold v \in L^\infty(0,T;H),\quad A^{2/3} \bold v \in L^2(0,T;H).
\end{equation}
\end{prop}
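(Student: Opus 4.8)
The plan is to derive a closed differential inequality for $Y(t) = \|A^{1/6}\bold v\|_{L^2}^2$ starting from the energy identity \eqref{eq-prop-main-v1}, which comes from testing the $\bold v$-equation \eqref{eq-main-v} with $A^{1/3}\bold v$; as in the earlier subsections the forcing contributes nothing essential, so I take $\bold f=0$. Writing $\bold u=\bold v+\bold w$ and expanding the flux produces the four pieces $b(\bold v,\bold v,A^{1/3}\bold v)$, $b(\bold v,\bold w,A^{1/3}\bold v)$, $b(\bold w,\bold v,A^{1/3}\bold v)$ and $b(\bold w,\bold w,A^{1/3}\bold v)$. The aim is to bound each by $\delta\|A^{2/3}\bold v\|_{L^2}^2$ plus $g(t)\,Y(t)$ with $g\in L^1(0,T)$, so that after choosing $\delta<\nu$ and applying Gronwall one gets $Y\in L^\infty(0,T)$ and $\|A^{2/3}\bold v\|_{L^2}^2\in L^1(0,T)$, which is precisely the assertion $A^{1/6}\bold v\in L^\infty(0,T;H)$ and $A^{2/3}\bold v\in L^2(0,T;H)$.

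First I would dispose of the purely two-dimensional piece $b(\bold v,\bold v,A^{1/3}\bold v)$. Here the two-dimensional Sobolev inequality of Lemma \ref{lem-tec-v1}, in the form \eqref{eq-tec-v4}, trades integrability for the factor $N^{1/2}$; combined with H\"older and interpolation of the intermediate derivative between $A^{1/6}\bold v$ and $A^{2/3}\bold v$, this yields a bound of the shape $C N^{a_1}\|\bold v\|_{L^2}^{a_2}\|A^{1/2}\bold v\|_{L^2}^{a_3}\,Y+\delta\|A^{2/3}\bold v\|_{L^2}^2$ for suitable positive exponents $a_i$. Since $\bold u$ is a weak solution we have $\|\bold v\|_{L^2}\le\|\bold u\|_{L^\infty(0,T;H)}$ and $\|A^{1/2}\bold v\|_{L^2}^2\in L^1(0,T)$, so the coefficient of $Y$ is time-integrable and this term is harmless; the $N$-dependence of the constant is allowed because $N$ is fixed.

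The real difficulty lies in the three pieces carrying the genuine three-dimensional part $\bold w$. A scaling count shows that, handled naively, a mixed term such as $b(\bold w,\bold v,A^{1/3}\bold v)$ is \emph{exactly critical}: H\"older with the triple $(3,p,p')$ followed by the fractional Sobolev embeddings forces the estimate $C\|\bold w\|_{L^3}\|A^{2/3}\bold v\|_{L^2}^2$, with no surviving power of $Y$ to throw into a time-integrable coefficient. This is the heart of the matter, and it explains the exponent $1/3$ in the test function: it is chosen precisely so that the scale-invariant quantity appearing on $\bold w$ is the $L^3$ norm, matching the hypothesis \eqref{eq-serrinN-1} with $q=3$. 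The obstacle I expect to be decisive is that this hypothesis only gives $\bold w\in L^\infty(0,T;L^3)$, i.e. \emph{boundedness} of $\|\bold w\|_{L^3}$, whereas a direct absorption into $\nu\|A^{2/3}\bold v\|_{L^2}^2$ would require $C\|\bold w\|_{L^3}<\nu$, i.e. genuine smallness. This is exactly why $q=3$ must be treated separately and why the technique of \cite{SS} is invoked.

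To break the critical absorption I would exploit the extra regularity that $\bold w$ inherits from being part of a weak solution, namely $\bold w=Q_N\bold u\in L^2(0,T;V)$, so that $\nabla\bold w\in L^2(0,T;L^2)$. The plan is to split $\bold w$ in frequency at a large threshold $M$ as $\bold w=R_M\bold w+(I-R_M)\bold w$, where $R_M$ is the projection onto frequencies $|k|\le M$. For the low part, Bernstein's inequality gives $\|R_M\bold w\|_{L^\infty}\lesssim M^{3/2}\|\bold u\|_{L^2}$, so replacing $\|\bold w\|_{L^3}$ by $\|R_M\bold w\|_{L^\infty}$ frees a derivative and makes that contribution strictly subcritical, hence absorbable with an $M$-dependent but time-integrable coefficient. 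For the high part, interpolating $L^3$ between $L^2$ and $L^6$ and using $\|(I-R_M)\bold w\|_{L^2}\lesssim M^{-1}\|\nabla\bold w\|_{L^2}$ together with $H^1\hookrightarrow L^6$ yields $\|(I-R_M)\bold w\|_{L^3}\lesssim M^{-1/2}\|\nabla\bold w\|_{L^2}$, a factor that is small for large $M$. I would then fix $M$ large and close the argument by a Gronwall scheme on a finite partition of $[0,T]$ adapted to the absolute continuity of $t\mapsto\int_0^t\|\nabla\bold w\|_{L^2}^2\,d\tau$, which is where \cite{SS} enters. I expect the delicate point to be exactly this extraction of genuine smallness of the critical coefficient from the $L^2_tH^1$ bound while keeping the low-frequency remainder integrable in time, and in particular the bookkeeping of the time-exponents produced by Young's inequality so that every coefficient multiplying $Y$ ends up in $L^1(0,T)$.
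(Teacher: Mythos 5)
There is a genuine gap, and it is a misdiagnosis of where the difficulty lies. You claim the mixed terms are ``exactly critical,'' forcing a bound $C\|\bold w\|_{L^3}\|A^{2/3}\bold v\|_{L^2}^2$ that would require smallness of $\|\bold w\|_{L^3}$. That would be true if the factor $A^{1/3}\bold v$ were measured with three-dimensional Sobolev embeddings, but the whole point of the construction is that $\bold v$ obeys the \emph{two-dimensional} embeddings of Lemma \ref{lem-tec-v1}: with constants depending on the fixed $N$ one has $\|A^{1/3}\bold v\|_{L^6}\le C_N\|A^{2/3}\bold v\|_{L^2}$ and $\|\bold v\|_{L^3}\le C_N\|A^{1/6}\bold v\|_{L^2}$. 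Moreover, you never exploit that $\nabla\bold u\in L^2(0,T;L^2)$ for a weak solution, which lets the full gradient (including $\nabla\bold w$) simply sit in $L^2$ as a time-integrable Gronwall coefficient. The paper splits only the first slot, $b(\bold u,\bold u,A^{1/3}\bold v)=b(\bold v,\bold u,A^{1/3}\bold v)+b(\bold w,\bold u,A^{1/3}\bold v)$, and estimates
\begin{equation*}
\left|\int (\bold v+\bold w)\,\nabla\bold u\,A^{1/3}\bold v\,dx\right|
\le \bigl(\|\bold v\|_{L^3}+\|\bold w\|_{L^3}\bigr)\,\|\nabla\bold u\|_{L^2}\,\|A^{1/3}\bold v\|_{L^6}
\le C\|\nabla\bold u\|_{L^2}^2\bigl(\|A^{1/6}\bold v\|_{L^2}^2+\|\bold w\|_{L^3}^2\bigr)+\delta\|A^{2/3}\bold v\|_{L^2}^2,
\end{equation*}
after Young's inequality. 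Since $\|\nabla\bold u\|_{L^2}^2\in L^1(0,T)$ and $\|\bold w\|_{L^3}\in L^\infty(0,T)$ by hypothesis, Gronwall closes immediately with $\delta=\nu/4$: no smallness, no absorption of a critical coefficient, is ever needed. Your reading of the role of \cite{SS} is also off: it is invoked \emph{after} the proposition, to conclude regularity from $\bold u\in L^\infty(0,T;L^3)$ (the endpoint $q=3$, $r=\infty$ of \eqref{eq-serrin-3}), not for any time-partition Gronwall scheme inside the proof.

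Beyond the misdiagnosis, your proposed repair does not actually close. The high-frequency estimate $\|(I-R_M)\bold w\|_{L^3}\lesssim M^{-1/2}\|\nabla\bold w\|_{L^2}$ yields a coefficient $M^{-1/2}\|\nabla\bold w(t)\|_{L^2}$ in front of $\|A^{2/3}\bold v\|_{L^2}^2$, and since $\|\nabla\bold w(t)\|_{L^2}$ is only square-integrable in time, not bounded, it cannot be absorbed into $\nu\|A^{2/3}\bold v\|_{L^2}^2$ pointwise in $t$ for any fixed $M$; the finite-partition argument you gesture at is left unspecified precisely at this step. One could also not replace it by uniform-in-$t$ smallness of the $L^3$ tail, since $\bold w\in L^\infty(0,T;L^3)$ gives no equicontinuity of the spectral tail in $t$. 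Fortunately, none of this machinery is required: applying Lemma \ref{lem-tec-v1} to the test-function factor, as the paper does, removes the criticality at the outset.
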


To prove the Proposition \ref{prop-main-v1}, we need to estimate the nonlinear term $b(\bold u, \bold u, A^{1/3} \bold v)$.
\begin{lemma}
\begin{equation}
\left| \int \bold v \, \nabla \bold u \, A^{1/3} \bold v \,dx \right| \le C \| \nabla \bold u \|_{L^2}^2 \| A^{1/6} \bold v \|_{L^2}^2 + \delta \| A^{2/3} \bold v \|_{L^2}^2.
\end{equation}
Here, $C$ is a  constant depending on $N$.
\end{lemma}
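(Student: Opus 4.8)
The plan is to estimate this trilinear term by one application of H\"older's inequality, followed by the two--dimensional Sobolev embedding \eqref{eq-tec-v1}, then an interpolation for the fractional powers of $A$ (the same $\ell^{1/s}$--$\ell^{1/(1-s)}$ duality that proves \eqref{eq-tec-v4}), and finally Young's inequality. First I would split the three factors into $L^6\times L^2\times L^3$, which is admissible since $\tfrac16+\tfrac12+\tfrac13=1$:
\[
\left| \int \bold v \, \nabla \bold u \, A^{1/3} \bold v \, dx \right| \le \| \bold v \|_{L^6} \, \| \nabla \bold u \|_{L^2} \, \| A^{1/3} \bold v \|_{L^3}.
\]
The point of this distribution is that both $\bold v$ and $A^{1/3}\bold v$ are supported on the same low-frequency slabs as $\bold v_N$ (because $A^{1/3}$ is a Fourier multiplier and does not alter the support structure of $\bold v_N$), so Lemma \ref{lem-tec-v1} applies to each of them. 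Taking $s=\tfrac13$, hence $\tfrac1q=\tfrac12-\tfrac13=\tfrac16$, gives $\|\bold v\|_{L^6}\le CN^{1/2}\|A^{1/3}\bold v\|_{L^2}$, while taking $s=\tfrac16$, hence $\tfrac1q=\tfrac13$, applied to $A^{1/3}\bold v$ gives $\|A^{1/3}\bold v\|_{L^3}\le CN^{1/2}\|A^{1/6}A^{1/3}\bold v\|_{L^2}=CN^{1/2}\|A^{1/2}\bold v\|_{L^2}$; both uses respect the constraint $0\le 2s<1$. This yields
\[
\left| \int \bold v \, \nabla \bold u \, A^{1/3}\bold v\,dx \right| \le CN\,\|A^{1/3}\bold v\|_{L^2}\,\|A^{1/2}\bold v\|_{L^2}\,\|\nabla \bold u\|_{L^2}.
\]

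Next I would interpolate the two middle norms between the endpoints $A^{1/6}$ and $A^{2/3}$ appearing on the right-hand side of the claim, using the log-convexity of $\theta\mapsto\|A^\theta\bold v\|_{L^2}$. From $\tfrac13=\tfrac23\cdot\tfrac16+\tfrac13\cdot\tfrac23$ and $\tfrac12=\tfrac13\cdot\tfrac16+\tfrac23\cdot\tfrac23$ one obtains $\|A^{1/3}\bold v\|_{L^2}\le\|A^{1/6}\bold v\|_{L^2}^{2/3}\|A^{2/3}\bold v\|_{L^2}^{1/3}$ and $\|A^{1/2}\bold v\|_{L^2}\le\|A^{1/6}\bold v\|_{L^2}^{1/3}\|A^{2/3}\bold v\|_{L^2}^{2/3}$. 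Multiplying, the exponents add to exactly $1$ on each factor, so the product collapses to $\|A^{1/6}\bold v\|_{L^2}\,\|A^{2/3}\bold v\|_{L^2}$, and the estimate becomes $CN\,\|A^{1/6}\bold v\|_{L^2}\,\|A^{2/3}\bold v\|_{L^2}\,\|\nabla\bold u\|_{L^2}$. A final Young inequality, grouping $\bigl(\|A^{1/6}\bold v\|_{L^2}\|\nabla\bold u\|_{L^2}\bigr)\cdot\|A^{2/3}\bold v\|_{L^2}$ with the conjugate pair $(2,2)$, produces exactly $C\|\nabla\bold u\|_{L^2}^2\|A^{1/6}\bold v\|_{L^2}^2+\delta\|A^{2/3}\bold v\|_{L^2}^2$, with $C=C(N,\delta)$ as asserted.

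The genuinely delicate point, and the reason the exponents are forced rather than free, is the bookkeeping of the derivative budget. The critical scaling $\tfrac3q+\tfrac2r=1$ at $q=3$ is exactly what breaks the $A^{1/2}$ test-function argument of the previous subsection, so the estimate must instead be closed with the highest norm $\|A^{2/3}\bold v\|_{L^2}$ appearing to the \emph{first} power after interpolation; only then does Young's inequality leave a clean quadratic $\delta\|A^{2/3}\bold v\|_{L^2}^2$ that can be absorbed by the dissipation in \eqref{eq-prop-main-v1}. I expect the main obstacle to be checking that there is a consistent choice of H\"older triple $(6,2,3)$ and interpolation weights for which simultaneously (i) \eqref{eq-tec-v1} is admissible at both fractional orders $s=\tfrac13,\tfrac16$, (ii) the interpolation exponents on $\|A^{2/3}\bold v\|_{L^2}$ sum to one, and (iii) the residual powers of $\|A^{1/6}\bold v\|_{L^2}$ and $\|\nabla\bold u\|_{L^2}$ come out equal so Young closes with conjugate pair $(2,2)$. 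These three requirements are mutually compatible precisely at the critical exponent, which is what makes this borderline case work.
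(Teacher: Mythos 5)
Your proof is correct, and it is essentially the paper's argument with one permutation: the paper splits the integrand by H\"older as $\|\bold v\|_{L^3}\,\|\nabla\bold u\|_{L^2}\,\|A^{1/3}\bold v\|_{L^6}$ and applies \eqref{eq-tec-v1} with $s=1/6$ to $\bold v$ and with $s=1/3$ to $A^{1/3}\bold v$, which lands \emph{directly} on the two target norms $\|A^{1/6}\bold v\|_{L^2}$ and $\|A^{2/3}\bold v\|_{L^2}$, so no interpolation is needed before Young's inequality. Your $(6,2,3)$ split instead produces $\|A^{1/3}\bold v\|_{L^2}\,\|A^{1/2}\bold v\|_{L^2}$ and you recover the target pair via log-convexity of $\theta\mapsto\|A^\theta\bold v\|_{L^2}$; your exponent bookkeeping ($\tfrac13=\tfrac23\cdot\tfrac16+\tfrac13\cdot\tfrac23$, $\tfrac12=\tfrac13\cdot\tfrac16+\tfrac23\cdot\tfrac23$, weights summing to one on each factor) checks out, your uses of Lemma \ref{lem-tec-v1} respect $0\le 2s<1$ at both $s=\tfrac13$ and $s=\tfrac16$, and your observation that $A^{1/3}$ is a Fourier multiplier preserving the slab structure of $\bold v_N$ (so the lemma applies componentwise) is exactly the justification the paper leaves implicit. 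The only cost of your route is the extra interpolation step, which the paper's choice of H\"older triple renders unnecessary; both close identically with Young's inequality at conjugate pair $(2,2)$.
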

\begin{proof}
From \eqref{eq-tec-v1} and \eqref{eq-tec-v4}, we have
\begin{equation}
\| \bold v \|_{L^3} \le C_N \| A^{1/6} \bold v \|_{L^2}, \, \text{ and } \, \| A^{1/3} \bold v \|_{L^{6}} \le C_N \| A^{2/3} \bold v \|_{L^{2}}.
\end{equation}

From above inequalities and the Holder inequality, we have
\begin{equation}
\begin{aligned}
\left| \int \bold v \, \nabla \bold u \, A^{1/3} \bold v \,dx \right| &\le \| \bold v \|_{L^3} \| \nabla \bold u \|_{L^2} \| A^{1/3} \bold v \|_{L^{6}} \\
&\le C \| A^{1/6} \bold v \|_{L^2} \| \nabla \bold u \|_{L^2} \| A^{2/3} \bold v \|_{L^2} \\
&\le C \| \nabla \bold u \|_{L^2}^2 \| A^{1/6} \bold v \|_{L^2}^2 + \delta \| A^{2/3} \bold v \|_{L^2}^2.
\end{aligned}
\end{equation}
\end{proof}

\begin{lemma}
\begin{equation}
\left| \int \bold w \, \nabla \bold u \, A^{1/3} \bold v \,dx \right| \le C \| \nabla \bold u \|_{L^2}^2 \| \bold w \|_{L^{3}}^2 + \delta \| A^{2/3} \bold v \|_{L^2}^2.
\end{equation}
$C$ is a constant depending on $N$.
\end{lemma}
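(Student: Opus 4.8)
The plan is to estimate $\int \bold w\,\nabla\bold u\,A^{1/3}\bold v\,dx=\int \bold w^i\,\partial_i\bold u^j\,(A^{1/3}\bold v)^j\,dx$ by a single H\"older split with the conjugate exponents $(3,2,6)$, putting $\bold w$ in $L^3$, $\nabla\bold u$ in $L^2$, and $A^{1/3}\bold v$ in $L^6$ (note $\tfrac13+\tfrac12+\tfrac16=1$). The factor $\|\bold w\|_{L^3}$ is exactly the quantity controlled by the Serrin hypothesis \eqref{eq-serrinN-1} at $q=3$, $r=\infty$, so I would leave it as is; likewise $\|\nabla\bold u\|_{L^2}^2$ is integrable in time for a weak solution and is kept untouched. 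All of the work is therefore in estimating the $L^6$ norm of $A^{1/3}\bold v$.

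For that factor I would invoke the two-dimensional embedding of Lemma \ref{lem-tec-v1}. Because $A^{1/3}$ leaves the Fourier support of each piece $\bold v^i$ unchanged, I can apply \eqref{eq-tec-v1} to the function $A^{1/3}\bold v^i$: taking $q=6$ forces $s=\tfrac12-\tfrac16=\tfrac13$, which satisfies $0\le 2s<1$, and yields $\|A^{1/3}\bold v^i\|_{L^6}\le CN^{1/2}\|A^{s}(A^{1/3}\bold v^i)\|_{L^2}=CN^{1/2}\|A^{2/3}\bold v^i\|_{L^2}$. Summing the three pieces gives $\|A^{1/3}\bold v\|_{L^6}\le C_N\|A^{2/3}\bold v\|_{L^2}$, the bound already recorded just above this lemma. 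Combined with the H\"older step this produces $\big|\int \bold w\,\nabla\bold u\,A^{1/3}\bold v\,dx\big|\le C_N\|\bold w\|_{L^3}\|\nabla\bold u\|_{L^2}\|A^{2/3}\bold v\|_{L^2}$.

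To conclude I would peel off the top-order factor with Young's inequality $ab\le \tfrac{1}{4\delta}a^2+\delta b^2$, applied to $a=C_N\|\bold w\|_{L^3}\|\nabla\bold u\|_{L^2}$ and $b=\|A^{2/3}\bold v\|_{L^2}$. This produces the term $\delta\|A^{2/3}\bold v\|_{L^2}^2$, which the dissipation $\nu\|A^{2/3}\bold v\|_{L^2}^2$ in \eqref{eq-prop-main-v1} will later absorb, and leaves $C\|\nabla\bold u\|_{L^2}^2\|\bold w\|_{L^3}^2$ with $C=C(N,\delta)$, which is precisely the claimed bound.

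I do not expect a genuine obstacle; the only delicate point is the exact matching of exponents. The choice $q=6$ for $A^{1/3}\bold v$ is dictated by the H\"older triple, and it is crucial that the resulting embedding lands exactly on $A^{2/3}\bold v$, the very norm appearing in the dissipation term. This is the structural reason the borderline case $q=3$ requires the test function $A^{1/3}\bold v$: with $A^{1/2}\bold v$ the same computation would demand $\|A^{5/6}\bold v\|_{L^2}$, which exceeds the available dissipation $\|A^{3/4}\bold v\|_{L^2}$ and could not be absorbed.
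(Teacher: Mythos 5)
Your proposal is correct and matches the paper's proof essentially verbatim: the same H\"older split with exponents $(3,2,6)$, the same embedding $\| A^{1/3}\bold v \|_{L^6} \le C_N \| A^{2/3}\bold v \|_{L^2}$ drawn from Lemma \ref{lem-tec-v1} (which the paper records just before this lemma), and the same Young's inequality step to peel off $\delta \| A^{2/3}\bold v \|_{L^2}^2$. Your closing remark on why the test function $A^{1/3}\bold v$ (rather than $A^{1/2}\bold v$) is forced at the borderline $q=3$ is a correct structural observation that the paper leaves implicit.
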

\begin{proof}
By the Holder inequality and lemma \ref{lem-tec-v1},
\begin{equation}
\begin{aligned}
\left| \int \bold w \, \nabla \bold u \, A^{1/3} \bold v \,dx \right| &\le \| \bold w \|_{L^3} \| \nabla \bold u \|_{L^2} \| A^{1/3} \bold v \|_{L^{6}} \\
&\le C \| \bold w \|_{L^{3}} \| \nabla \bold u \|_{L^2} \| A^{2/3} \bold v \|_{L^2} \\
&\le C \| \nabla \bold u \|_{L^2}^2 \|\bold w \|_{L^{3}}^2 + \delta \| A^{2/3} \bold v \|_{L^2}^2. \\
\end{aligned}
\end{equation}
\end{proof}

\begin{proof}[Proof of Proposition \ref{prop-main-v1}]
From \eqref{eq-prop-main-v1}, and from the above two lemmas, we have
\begin{equation}
\frac{d}{dt} \| A^{1/6} \bold v \|_{L^2}^2 + \nu \| A^{2/3} \bold v \|_{L^2}^2 \le C \| \nabla \bold u \|_{L^2}^2 \| A^{1/6} \bold v \|_{L^2}^2 + C \| \nabla \bold u \|_{L^2}^2 \| \bold w \|_{L^{3}}^2.
\end{equation}
by taking $\delta = \nu/4$.

Hence we get the following by Gronwall's inequality,
\begin{equation}
\begin{aligned}
&\| A^{1/6} \bold v \|_{L^2}^2 + \int_0^T \| A^{2/3} \bold v(\tau) \|_{L^2}^2 d\tau \\
&\qquad\le \exp\left\{ \int_0^T  C \| \nabla \bold u(\tau) \|_{L^2}^2 d\tau \right\} \left[ \| A^{1/6} \bold v(0) \|_{L^2}^2 \right. \\
 &\qquad \quad \left. + C  \sup_{0 \le \tau \le T} \| \bold w(\tau) \|_{L^{3}}^2 \int_0^T \| \nabla \bold u(\tau) \|_{L^2}^\gamma  d\tau \right].
\end{aligned}
\end{equation}
\end{proof}

Now, we are ready for the proof for $q=3$.  By Proposition \ref{prop-main-v1} and Lemma \ref{lem-tec-v1},
\begin{equation}
\sup_{0\le t\le T} \| \bold v (t) \|_{L^3}
\end{equation}
is bounded.  From this and our assumption, we have $\bold u \in L^\infty(0.T;L^3)$, which is enough by \eqref{eq-serrin-3}.

The following corollary actually implies that the energy spectrum should not be bounded uniformly by $\kappa^{-2-\epsilon}$.
\begin{cor}
Suppose $\bf u$ breaks down firstly at $t=T$.   Then, for any $N>1$ and any $\delta >2$,
\[ \liminf_{t\to T} \sup_{\kappa} \kappa^{\delta} E(\kappa) = \infty.\]
\end{cor}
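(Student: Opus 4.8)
The plan is to argue by contraposition. Assume $\mathbf{u}$ is smooth on $[0,T)$ and first loses regularity at $t=T$, and fix $N>1$ and $\delta>2$. I will show that a uniform spectral decay bound near $T$ would force an a priori $L^3$ bound on $\mathbf{w}_N$ that contradicts the blow-up dictated by Theorem \ref{thm-main}. The bridge between the hypothesis on $E(\kappa)$ and the function-space norms controlled in the proof of the $q=3$ case is the embedding $\dot H^{1/2}(\mathbb{T}^3)\hookrightarrow L^3(\mathbb{T}^3)$, that is, Theorem \ref{thm-tec-sob1} with $n=3$, $q=3$, $s=1/4$.

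The key quantitative step is the inequality
\begin{equation*}
\| \mathbf{w}_N(t) \|_{L^3}^2 \le C_\delta \, \sup_{\kappa} \kappa^{\delta} E(\kappa),
\end{equation*}
with $C_\delta$ independent of $N$ and $t$. To prove it I would apply Theorem \ref{thm-tec-sob1} to reduce the left-hand side to $\|(-\Delta)^{1/4}\mathbf{w}_N\|_{L^2}^2+\|\mathbf{w}_N\|_{L^2}^2$, then pass to the Fourier side. Since $\lambda_k^{1/4}\simeq |k|^{1/2}$ and $E(\kappa)=\kappa^2\sum_{|k|=\kappa}|\mathbf{c}^k|^2$, these two terms are constant multiples of $\sum_\kappa \kappa^{-1}E(\kappa)$ and $\sum_\kappa \kappa^{-2}E(\kappa)$ respectively. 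Inserting $E(\kappa)\le (\sup_\kappa \kappa^\delta E(\kappa))\,\kappa^{-\delta}$ and using $\sum_\kappa \kappa^{-1-\delta}\le \sum_{n\ge 1} n^{-(1+\delta)/2}<\infty$ for $\delta>1$ (hence a fortiori for $\delta>2$) bounds both sums by a fixed multiple of $\sup_\kappa \kappa^\delta E(\kappa)$, which is exactly the displayed inequality.

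With this in hand the conclusion combines with the regularity theory already established. By the $q=3$ case of Theorem \ref{thm-main}, through Proposition \ref{prop-main-v1} and the paragraph following it, if $\sup_{0<t<T}\|\mathbf{w}_N(t)\|_{L^3}<\infty$ for some $N$, then $\mathbf{u}$ would be a classical solution up to $t=T$; since we assumed a first singularity at $T$, we must have $\sup_{0<t<T}\|\mathbf{w}_N(t)\|_{L^3}=\infty$ for every $N>1$. Feeding the displayed inequality in, $\sup_\kappa \kappa^\delta E(\kappa)$ is unbounded as $t\uparrow T$.

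The main obstacle is that the previous paragraph yields only $\limsup_{t\to T}\sup_\kappa\kappa^\delta E(\kappa)=\infty$, whereas the statement asserts the stronger $\liminf_{t\to T}\,(\cdots)=\infty$; upgrading the soft blow-up to the $\liminf$ is where the real work lies. My preferred route is to make the blow-up quantitative: local well-posedness in the critical space $\dot H^{1/2}$ gives, at a first singular time, the lower rate $\|\mathbf{u}(t)\|_{\dot H^{1/2}}\ge c\,(T-t)^{-1/4}$ for \emph{all} $t<T$, which is genuinely a $\liminf$ statement, and reading the displayed inequality for the full spectrum then forces $\sup_\kappa\kappa^\delta E(\kappa)\ge c\|\mathbf{u}(t)\|_{\dot H^{1/2}}^2\to\infty$. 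To keep the conclusion phrased for the genuine 3D part $\mathbf{w}_N$ one must transfer this lower bound from $\mathbf{u}$ to $\mathbf{w}_N$, controlling $\mathbf{v}_N=\mathbf{u}-\mathbf{w}_N$ via Proposition \ref{prop-main-v1} and Lemma \ref{lem-tec-v1}; alternatively one invokes the endpoint criterion of \cite{SS}, which yields $\|\mathbf{u}(t)\|_{L^3}\to\infty$ along the whole approach to $T$. Carrying out this $\mathbf{v}_N$-to-$\mathbf{w}_N$ transfer, so that the $\liminf$ holds for $\mathbf{w}_N$ itself and not merely for $\mathbf{u}$, is the step I expect to require the most care.
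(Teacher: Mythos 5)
Your first two steps are precisely the paper's own proof: Theorem \ref{thm-tec-sob1} with $n=3$, $s=1/4$ reduces $\|\mathbf{w}_N\|_{L^3}$ to $\|A^{1/4}\mathbf{w}_N\|_{L^2}$ plus the (a priori bounded) $L^2$ norm, the Fourier side becomes $\sum_{\kappa>N}\kappa^{-1}E(\kappa)\le \bigl(\sup_\kappa \kappa^\delta E(\kappa)\bigr)\sum_\kappa \kappa^{-1-\delta}<\infty$, and the resulting $L^\infty(t_0,T;L^3)$ bound on $\mathbf{w}_N$ contradicts the breakdown at $T$ via the $q=3$ case of Theorem \ref{thm-main} (you even silently correct the paper's typo, which writes $\kappa E(\kappa)$ where $\kappa^{-1}E(\kappa)$ is meant). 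Moreover, your diagnosis of the $\liminf$/$\limsup$ discrepancy is a genuine catch, and it applies to the paper itself: the paper's proof opens with ``suppose not, then $\sup_\kappa\kappa^\delta E(\kappa)<C$ for $t_0<t<T$,'' i.e.\ it negates the conclusion as boundedness on an entire left-neighborhood of $T$. That is the negation of $\limsup_{t\to T}\sup_\kappa\kappa^\delta E(\kappa)=\infty$, not of the stated $\liminf$; boundedness along a mere sequence $t_j\to T$ gives $\|\mathbf{w}_N(t_j)\|_{L^3}\le C$ only at those times, which does not produce the $L^{3,\infty}$-in-time Serrin norm that Theorem \ref{thm-main} requires. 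The paper performs no upgrade; it simply proves the weaker interval statement.

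Your proposed repair, however, does not close this gap. The lower bound $\|\mathbf{u}(t)\|_{\dot H^{1/2}}\ge c\,(T-t)^{-1/4}$ is not a consequence of local well-posedness: $(T-t)^{-1/4}$ is the classical subcritical rate for $\dot H^{1}$ (more generally $(T-t)^{-(2s-1)/4}$ for $\dot H^s$, $s>1/2$), and the exponent degenerates exactly at the critical index $s=1/2$, where the local existence time depends on the solution profile and not on the norm alone, so no rate --- indeed no quantitative lower bound at all --- follows. Even the rateless divergence $\|\mathbf{u}(t)\|_{L^3}\to\infty$ at a first singular time is a much deeper and later theorem of Seregin, not the ``endpoint criterion'' of the cited \cite{SS}, which yields only a $\limsup$ statement. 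Finally, your $\mathbf{v}_N$-to-$\mathbf{w}_N$ transfer is circular: Proposition \ref{prop-main-v1} controls $\mathbf{v}_N$ precisely under the hypothesis $\sup_{t<T}\|\mathbf{w}_N(t)\|_{L^3}<\infty$, which is exactly what fails in the blow-up scenario, so it cannot be invoked to pass a lower bound from $\mathbf{u}$ to $\mathbf{w}_N$ along a sequence where the $\mathbf{w}_N$-spectrum is assumed bounded. In short: your core argument is correct and coincides with the paper's; the $\liminf$ strengthening is unproved both in your proposal and, as written, in the paper, and within the paper's toolkit the honest conclusion is that $\sup_\kappa\kappa^\delta E(\kappa)$ cannot remain bounded on any interval $(t_0,T)$.
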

\begin{proof}
Suppose not, then, for some $N>1$, $\delta>2$ and $t_0<T$, we get
\[ \sup_k \left( \kappa^{\delta} E(\kappa) \right) < C \]
for $t_0< t <T$ for some constant $C$.  But then, $E(\kappa) < C \kappa^{-\delta}$ for $t_0< t <T$.
Now, by theorem \ref{thm-main}, \ref{thm-tec-sob1},  and $L^2$ boundedness of $\bold u$, taking $q=3$, we have
\[  \infty =\int_0^T \| \bold w_N \|_{L^3} \leq C \int_0^T \| A^{1/4} \bold w_N \|_{L^2}.   \]
However,
\[  \| A^{1/4} \bold w_N \|_{L^2} = C \sum_{|k_i|>N} |k| |c^k |^2 \leq C \sum_{\kappa >N} \kappa E(\kappa) < C \]
and we arrive at a contradiction.
\end{proof}
Since the solution is smooth at any fixed $t<T$, the spectrum should be bounded by $\kappa^{-2-\epsilon}$ at any fixed $t<T$.
Thus, it actually implies that, as $t\to T$, there appears some range of $\kappa$ for which $E(\kappa) > C \kappa^{-2-\epsilon}$
for any $\epsilon$ and any $C>0$.

\end{document}